\def\?[#1]{\textbf{[#1]}\marginpar{\Large{\textbf{??}}}}
\newtheorem{theo}{Theorem}
\newtheorem{prop}{Proposition}[section]
\newtheorem{lem}[prop]{Lemma}
\newtheorem{cor}[prop]{Corollary}
\numberwithin{equation}{section}
\DeclareMathOperator{\supp}{supp}
\DeclareMathOperator{\Vol}{dvol}
\DeclareMathOperator{\vol}{Vol}
\DeclareMathOperator{\Tr}{Tr}
\def\plM{\partial\overline M}
\newcommand{\mc}{\mathcal}
\newcommand{\rr}{\mathbb{R}}
\newcommand{\nn}{\mathbb{N}}
\newcommand{\cc}{\mathbb{C}}
\newcommand{\hh}{\mathbb{H}}
\newcommand{\zz}{\mathbb{Z}}
\newcommand{\la}{\lambda}
\newcommand{\eps}{\varepsilon}
\newcommand{\pl}{\partial}
\newcommand{\x}{\times}
\newcommand{\til}{\widetilde}
\newcommand{\bbar}{\overline}
\newcommand{\cjd}{\rangle}
\newcommand{\cjg}{\langle}
\newcommand{\demi}{\frac{1}{2}}
\newcommand{\ndemi}{\frac{n}{2}}
\newcommand{\tra}{\textrm{Tr}}
\newcommand{\indic}{\operatorname{1\hskip-2.75pt\relax l}}
\title[Scattering phase asymptotics with fractal remainders]%
{Scattering phase asymptotics\\
with fractal remainders}
\author{Semyon Dyatlov}
\email{dyatlov@math.berkeley.edu}
\address{Department of Mathematics, Evans Hall, University of California,
Berkeley, CA 94720, USA}
\author{Colin Guillarmou}
\email{cguillar@dma.ens.fr}
\address{DMA, U.M.R. 8553 CNRS, \'Ecole Normale Superieure, 45 rue d'Ulm,
75230 Paris cedex 05, France}
\begin{document}

\begin{abstract}
For a Riemannian manifold $(M,g)$ which is isometric to the Euclidean
space outside of a compact set, and whose trapped set has Liouville
measure zero, we prove Weyl type asymptotics for the scattering phase
with remainder depending on the classical escape rate and the maximal
expansion rate. For Axiom~A geodesic flows, this gives a polynomial
improvement over the known remainders. We also show that the remainder
can be bounded above by the number of resonances in some
neighbourhoods of the real axis, and provide similar asymptotics for
hyperbolic quotients using the Selberg zeta function.
\end{abstract}

\maketitle

\addtocounter{section}{1}
\addcontentsline{toc}{section}{1. Introduction}

In this paper, we derive high energy asymptotics for the
\emph{spectral shift function}, or the \emph{scattering phase}, with
remainders which depend on the dynamic behavior of the underlying
classical system. Our main setting is an $n+1$ dimensional Riemannian
manifold $(M,g)$ which is Euclidean near infinity, but we shall also
consider in the second part of the paper the case of convex co-compact
hyperbolic quotients $\Gamma\backslash \mathbb H^{n+1}$ using a
different approach.  If $\Delta$ is the (nonnegative)
Laplace--Beltrami operator on $M$ and $\Delta_0$ is a reference
operator (such as the Laplacian on the free space when $(M,g)$ is a
metric perturbation of the Euclidean space), then the spectral shift
function is the trace of the difference between the spectral
projectors of $\Delta$ and $\Delta_0$~-- see~\eqref{defspectral}
and~\eqref{defofspect}. The scattering phase is the phase of the
determinant of the relative scattering matrix. In a very general
setting, these two functions are equal almost everywhere on the
absolutely continuous spectrum, as shown in the classical work of
Birman--Krein~\cite{BiKr}, thus we will denote both of them by the
symbol $s(z)$, where $z\in \mathbb R$ is the frequency.

Our recent work~\cite{DyGu} contains a local Weyl law with a remainder
expressed in terms of classical escape rate~-- in particular, if the
metric has constant curvature $-1$ near the trapped set $K$ and the
Hausdorff dimension of $K$ is $\dim_H(K)=2\delta+1$, then the
remainder is $\mathcal O(z^{\delta+})$. If $M$ is Euclidean near
infinity, we use the commutator method of Robert~\cite{Ro} to deduce
an asymptotic expansion of $s(z)$ with the same remainder~-- see
Theorem~\ref{t:trace}. This seems to be the first result on spectral
asymptotics with a fractal remainder which is intermediate between the
usual Weyl law and the non-trapping case where there is a complete
expansion at high frequency. By contrast, the best known remainder for
Weyl law on compact manifolds with chaotic geodesic flows is $\mathcal
O(z^n/\log z)$, see B\'erard~\cite{berard}.

We next give several other estimates on the spectral shift function in
different settings, with remainders related to the one discussed in
the previous paragraph. First of all, the function $s(z)$ admits a
meromorphic extension into the whole complex plane, and the asymptotic
behavior of $s(z)$ for large real $z$ is related to the distribution
of its poles, known as scattering poles or resonances.  In the
Euclidean near infinity setting, we get an asymptotic expansion of the
scattering phase with remainder depending on the number of resonances
in the ball of size $c\log(z)$ centered at $z$, for some $c$ depending
on the injectivity radius~-- see Theorem~\ref{usingresonance}. Next,
for the case of hyperbolic surfaces $\Gamma\backslash \mathbb H^2$, we
get an asymptotic expansion of $s(z)$ using the Selberg zeta
function~-- see Theorem~\ref{t:scattering-phase-special}.

\subsection{Euclidean near infinity case~-- an application of~\cite{DyGu}}

Let $g_0$ and $g_1$ be two smooth metrics on a manifold $M$. When
there exists a compact set $N\subset M$ such that each $(M\setminus
N,g_j)$ is isometric to $\rr^{n+1}\setminus B(0,R_0)$ for some
$R_0>0$, we define (up to a constant) the spectral shift function
$s(z)$ for the pair $(\Delta_{g_1},\Delta_{g_0})$ by duality: for all
$\varphi\in C_0^\infty(\rr)$,
\begin{equation}\label{defspectral}
\int_0^\infty\varphi(z) \partial_z s(z)\,dz
=\Tr(\varphi(\sqrt{\Delta_{g_0}})-\varphi(\sqrt{\Delta_{g_1}}))
\end{equation}
where it is straightforward to see that the trace is well defined,
using that~$g_0=g_1$ outside a compact set.  In this setting, $s(z)$
will be a smooth function on $(0,\infty)$ and it is equal, up to a multiplicative constant, to
the phase of the relative scattering operator by the work of
Birman--Krein~\cite{BiKr}. 

We shall also be interested in the case where $(M_0,g_0)$ and
$(M_1,g_1)$ are two Riemannian manifolds such that for each $j=0,1$
there is a compact set $N_j\subset M_j$ with $M_j\setminus N_j$
isometric to $\rr^{n+1}\setminus B(0,R_0)$. The spectral shift
function for the pair $(\Delta_{g_1},\Delta_{g_0})$ can then be
defined using the black-box trace~\cite{Sj} by the formula
\begin{equation}\label{defofspect}
\int_0^\infty\varphi(z) \partial_z s(z)\,dz
=\tra_{\rm bb}(\varphi(\sqrt{\Delta_{g_0}})-\varphi(\sqrt{\Delta_{g_1}})),
\end{equation}
where the trace $\tra_{\rm bb}$ is defined in~\eqref{bbtrace}.

Before we state our result, we need to define a few geometric
quantities. Let $(M,g)$ be a Riemannian manifold such that, for some
compact set $N\subset M$, the end $(M\setminus N,g) $ is isometric to
$\rr^{n+1}\setminus B(0,R_0)$ for some $R_0>0$.  Let $g^t$ be the
geodesic flow acting on the unit cotangent bundle $S^*M$. The
\emph{trapped set} $K\subset S^*M$ is defined as follows: $(m,\nu)$
lies in $K$ if and only if the corresponding geodesic $g^t(m,\nu)$
lies entirely in some compact subset of $S^*M$. Denote by $\mu_L$ the
Liouville measure on $S^*M$ generated by the function
$\sqrt{p(m,\nu)}=|\nu|_g$.  The set $\mathcal T(t)$ of geodesics
trapped for time $t>0$ is defined as
\begin{equation}
  \label{e:T-t}
\mathcal T(t):=\{(m,\nu)\in S^*M\mid m\in N,\ \pi(g^t(m,\nu))\in N\}.
\end{equation}
where $\pi :S^*M\to M$ is the canonical projection.
We define the \emph{maximal expansion rate} of the geodesic flow as follows:
\begin{equation}
  \label{lamax}
\Lambda_{\max}:=\limsup_{|t|\to +\infty}{1\over |t|}\log \sup_{(m,\nu)\in \mathcal T(t)}
\|dg^t(m,\nu)\|.
\end{equation}  
The norm on the right hand side is with respect to the Sasaki metric.  
Using our recent work~\cite{DyGu},  we obtain
%
%
\begin{theo}
  \label{t:trace}
Let $(M,g)$ be a smooth Riemannian manifold for which there is a
compact subset $N$ such that $M\setminus N$ is isometric to
$\rr^{n+1}\setminus B(0,R_0)$. Assume that the trapped set $K$ has
Liouville measure $0$ and let $s(z)$ be the spectral shift function
associated to the pair $(\Delta_g, \Delta_{\rr^{n+1}})$ defined
by~\eqref{defofspect}. Let $\Lambda_{\max}$ be defined
in~\eqref{lamax} and $\Lambda_0>\Lambda_{\max}$,Ê then there exist
some coefficients $c_j$ such that as $z\to \infty$ we have for all
$L\in \nn$
\begin{equation}
  \label{e:s-estimate}
s(z)=\sum_{j=0}^Lc_jz^{n+1-j}+\mathcal O\big(z^n\mu_L\big(
\mathcal T(\Lambda_0^{-1}\log|z|)\big)\big)+\mathcal O(|z|^{n-L}).
\end{equation}
\end{theo}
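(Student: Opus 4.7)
The strategy is to combine the local Weyl law from our companion work~\cite{DyGu} with the commutator method of Robert~\cite{Ro}. The former asserts, via a Tauberian argument, that for any smooth cutoff $\chi\in C_c^\infty(M)$ equal to one near $N$ and any $L\in\nn$,
\begin{equation*}
\Tr\bigl(\chi\,\mathbf{1}_{[0,z]}(\sqrt{\Delta_g})\bigr)=\sum_{j=0}^L c_j(\chi)\,z^{n+1-j}+\mathcal O\bigl(z^n\mu_L(\mathcal T(\Lambda_0^{-1}\log z))\bigr)+\mathcal O(z^{n-L}),
\end{equation*}
a remainder that exactly matches~\eqref{e:s-estimate}. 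The task is therefore to convert this \emph{localized} Weyl law into an asymptotic for the \emph{global} quantity $s(z)$ defined by~\eqref{defofspect}.

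The key identity comes from the Euclidean structure at infinity. Under the identification $M\setminus N\simeq\rr^{n+1}\setminus B(0,R_0)$, the symmetrized generator of dilations $A=\tfrac{1}{2}(x\cdot D_x+D_x\cdot x)$ satisfies $i[\Delta_g,A]=2\Delta_g$ \emph{and} $i[\Delta_0,A]=2\Delta_0$ outside $N$. For a test function $\varphi$, choose $F$ with $2\lambda F'(\lambda)=\varphi(\sqrt\lambda)$; functional calculus then gives
\begin{equation*}
\varphi(\sqrt{\Delta_g})=i[F(\Delta_g),A]+K_g,\qquad \varphi(\sqrt{\Delta_0})=i[F(\Delta_0),A]+K_0,
\end{equation*}
where $K_g,K_0$ are operators whose Schwartz kernels are supported near $N$. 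Since $F(\Delta_g)-F(\Delta_0)$ is trace class in the black-box sense of~\cite{Sj}, cyclicity kills the commutator contributions after subtraction, leaving
\begin{equation*}
\Tr\bigl(\varphi(\sqrt{\Delta_g})-\varphi(\sqrt{\Delta_0})\bigr)=\Tr(K_g)-\Tr(K_0),
\end{equation*}
a difference of \emph{localized} traces of the form $\Tr(\chi\,g(\sqrt{\Delta_g}))-\Tr(\chi\,g(\sqrt{\Delta_0}))$ for test functions $g$ built from $\varphi$.

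Applying the local Weyl law from~\cite{DyGu} to the $\Delta_g$ trace and the standard Euclidean Weyl expansion to the $\Delta_0$ trace, then running the same Tauberian argument as in~\cite{DyGu} to pass from integrated spectral data to a pointwise asymptotic, yields~\eqref{e:s-estimate}. The fractal remainder survives the Tauberian step because the logarithmic time window $\Lambda_0^{-1}\log|z|$ is preserved throughout.

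The main obstacle is the book-keeping in the commutator reduction: to reach the error $\mathcal O(z^{n-L})$ one must iterate the identity above $L$ times, each step producing a further localized trace to be estimated via~\cite{DyGu}. Each iteration is permissible only while the associated propagation time remains below $\Lambda_0^{-1}\log|z|$, which is precisely where the hypothesis $\Lambda_0>\Lambda_{\max}$ enters. Ensuring that no additional logarithmic loss accumulates through these iterations is the delicate technical point inherited from the sharp propagation estimates of~\cite{DyGu}.
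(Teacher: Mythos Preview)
Your high-level strategy is correct and matches the paper: use Robert's commutator trick with the Euclidean dilation generator to convert the black-box trace defining $\partial_z s$ into localized traces involving compactly supported operators, then feed those into the local Weyl law of~\cite{DyGu} (Theorem~\ref{asympofs_A}). However, your final paragraph introduces a mechanism that is both unnecessary and incorrect.

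You claim that reaching the error $\mathcal O(z^{n-L})$ requires iterating the commutator identity $L$ times, and that $\Lambda_0>\Lambda_{\max}$ enters by controlling propagation during those iterations. Neither is true. The commutator reduction is performed \emph{once}: it produces compactly supported second-order differential operators $S_j$, and after an energy cutoff $\tilde\psi$ near the shell, the resulting $T_j=h^2S_j\tilde\psi(h^2\Delta_{g_j})$ lies in $\Psi^{-\infty}(M)$ and is compactly supported up to a trace-class $\mathcal O(h^\infty)$ tail. One then applies Theorem~\ref{asympofs_A} directly to $\Tr\bigl(T_j\indic_{[0,1]}(h^2\Delta_{g_j})\bigr)$, whose remainder is already $h^{-n}\mathcal O\bigl(\mu_L(\mathcal T(\Lambda_0^{-1}|\log h|))+h^L\bigr)$; setting $h=1/z$ yields \emph{both} error terms in~\eqref{e:s-estimate} simultaneously. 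The hypothesis $\Lambda_0>\Lambda_{\max}$ is a standing assumption of Theorem~\ref{asympofs_A} and enters only there, not through any iteration here. Your proposed iterative scheme, with its worry about accumulating logarithmic losses, is solving a problem that does not exist.

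Two smaller technical points. First, your choice $2\lambda F'(\lambda)=\varphi(\sqrt\lambda)$ makes $F'$ singular at $\lambda=0$ for generic $\varphi$; the paper avoids this by splitting $1=\psi+\varphi$ on $[0,1]$ with $\psi$ supported near the energy shell, applying the commutator only to the $\psi$ piece, and treating the $\varphi$ piece by the full functional-calculus expansion of~\cite[Th\'eor\`eme~2.1]{Ro}. Second, the cyclicity step for the unbounded dilation generator $A$ is not automatic and needs a limiting argument with cutoffs on large balls, as in~\cite[Appendice~A]{Ro}; your one-line invocation of cyclicity glosses over this.
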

%
%
The coefficients $c_j$ are integrals of local Riemannian invariants,
they appear in the small time asymptotics for the local trace of the
heat kernel or Schr\"odinger propagator (e.g. see Robert \cite{Ro});
for instance, $c_0=C_n({\rm Vol}_{\rm eucl}(B(0,R_0))-{\rm Vol}_g(N))$
where $C_n$ is a universal constant depending only on $n$. When
$K=\emptyset$, we recover a full expansion for $s(z)$, when
$K\not=\emptyset$, the remainder is not better than $\mc{O}(1)$.

In the proof, we need to assume that $M$ is Euclidean at infinity for
\eqref{e:s-estimate} since we use the commutator method of Robert
\cite{Ro} to express the scattering phase as a trace of the spectral
projector in a compact region, and then we apply our estimate
\cite[Theorem~3]{DyGu} on the expansion as $h\to 0$ of the trace
$\tra(A\indic_{[0,\la]}(h^2\Delta_g))$ when $\la>0$ is in a compact
interval and $A\in \Psi^0(M)$ is any compactly supported semiclassical
pseudodifferential operator. It is likely that the result extends to
perturbations of $\rr^{n+1}$ which are not compactly supported.\\

We can now specialize to particular cases where the flow is
\emph{uniformly partially hyperbolic} in the following sense: there
exists $\la>0$ and an invariant splitting of $TS^*M$ over $K$ into
continuous subbundles
\[
T_{z}S^*M=E^{cs}_z\oplus E^u_z, \quad \forall z\in K
\]
such that the dimensions of $E^u$ and $E^{cs}$ are constant on $K$ and for all $\eps>0$, there is $t_0\in \rr$ such that 
\[
\forall z\in K, \,\, \forall t\geq t_0,  
\left\{\begin{array}{ll}
\forall v\in E^{u}_z,\,\, \|dg_z^t v\|\geq e^{\la t}\|v\|,\\
\forall v\in E^{cs}_z, \,\, \|dg^t_zv\|\leq  e^{\eps t}\|v\|.
\end{array}\right.
\]
We can define $J^u$, the unstable Jacobian of the flow, by  
\[
J^u(z):=-\pl_{t}(\det dg^t(z)|_{E^u_z})|_{t=0}
\]
where $dg^t: E^u_z\to E^u_{g^t(z)}$ and the determinant is defined
using the Sasaki metric (to choose orthonormal bases in $E^u$). The
\emph{topological pressure} of a continuous function $\varphi:K\to
\rr$ with respect to the flow can be defined by the variational
formula
\begin{equation}\label{topopressure}
P(\varphi):=\sup_{\mu\in \mc{M}(K)}\Big(h_{\mu}(g^1)+\int \varphi d\mu\Big)
\end{equation}
where $\mc{M}(K)$ is the set of $g^t$-invariant Borel probability
measures and $h_\mu(g^1)$ is the measure theoretic entropy of the flow
at time $1$ with respect to $\mu$. Young \cite{Yo} proved that for
uniformly partially hyperbolic flows, the classical escape rate
$\lim_{t\to +\infty}\frac{1}{t}\log(\mu_L(\mathcal T(t))$ is equal to
$P(J^u)$. Combining this with Theorem~\ref{t:trace}, we obtain
%
%
\begin{cor}
  \label{uph}
Let $(M,g)$ be a Riemannian manifold satisfying the assumptions of
Theorem \ref{t:trace}, and $\Lambda_0>\Lambda_{\max}$. If the geodesic
flow is uniformly partially hyperbolic on $K\not=\emptyset$ with
negative topological pressure $P(J^u)<0$, then
\begin{equation}\label{axiomAcase}
s(z)=\sum_{j=0}^{n+1} c_jz^{n+1-j}+\mathcal O(z^{n+P(J^u)/\Lambda_0}).
\end{equation}
In particular, if $g$ has curvature $-1$ near the trapped set $K$
and the latter has Hausdorff dimension $2\delta+1$, then
\begin{equation}\label{constcurv}
 s(z)=\sum_{j=0}^{[n+1-\delta]} c_jz^{n+1-j}+\mathcal O(z^{\delta+}).
 \end{equation}
\end{cor}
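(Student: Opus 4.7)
The strategy is to plug Young's identification of the classical escape rate into Theorem~\ref{t:trace}. Young's theorem asserts that, under uniform partial hyperbolicity, $t^{-1}\log\mu_L(\mathcal T(t))\to P(J^u)$ as $t\to\infty$, so for any $\eta>0$ there exists $C_\eta>0$ with $\mu_L(\mathcal T(t))\le C_\eta e^{(P(J^u)+\eta)t}$ for all $t\ge 0$. Substituting $t=\Lambda_0^{-1}\log z$ into the middle error term of~\eqref{e:s-estimate} gives
$$z^n\mu_L\bigl(\mathcal T(\Lambda_0^{-1}\log z)\bigr)=\mathcal O\bigl(z^{n+(P(J^u)+\eta)/\Lambda_0}\bigr).$$
Since $P(J^u)<0$, choosing $L>-P(J^u)/\Lambda_0$ makes the last error term $\mathcal O(|z|^{n-L})$ dominated by the middle one, and the terms $c_j z^{n+1-j}$ for $j>n+1$ are negative powers of $z$ that are likewise absorbed into the error. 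Finally, enlarging $\Lambda_0$ slightly absorbs $\eta$ and yields~\eqref{axiomAcase}.

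For the second assertion, I specialize the three inputs $\Lambda_{\max}$, $J^u$ and $P(J^u)$ to the constant curvature $-1$ setting near $K$. First, in curvature $-1$ the Sasaki norm of $dg^t$ on the Anosov splitting over $K$ satisfies $\|dg^t|_{E^u}\|=e^t$, $\|dg^t|_{E^s}\|=e^{-t}$, and $dg^t$ is an isometry in the flow direction, so $\Lambda_{\max}=1$ and one may let $\Lambda_0\to 1^+$. Next, $\det dg^t|_{E^u_z}=e^{nt}$ with $n=\dim E^u$, so $J^u\equiv -n$ is the constant function. By the translation invariance $P(\varphi+c)=P(\varphi)+c$ of the topological pressure, $P(J^u)=h_{\mathrm{top}}(g^1|_K)-n$.

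The remaining and key ingredient, which I would flag as the main obstacle, is to identify the topological entropy of the geodesic flow on the trapped set with the critical exponent $\delta$ appearing in $\dim_H(K)=2\delta+1$. This is a classical Patterson--Sullivan result for convex co-compact hyperbolic manifolds: the topological entropy of the flow on the non-wandering set coincides with the exponent of convergence of the Poincar\'e series of $\Gamma$, which in turn equals the Hausdorff dimension of the limit set and relates to that of $K$ by adding the flow direction. Granting this, $P(J^u)=\delta-n$, and combining the three inputs gives
$$n+\frac{P(J^u)}{\Lambda_0}=\delta+(n-\delta)\Bigl(1-\frac1{\Lambda_0}\Bigr),$$
which tends to $\delta$ as $\Lambda_0\to 1^+$, producing the remainder $\mathcal O(z^{\delta+})$ in~\eqref{constcurv}.
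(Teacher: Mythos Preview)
Your argument is correct and follows the same route as the paper, which simply says ``combine Young's identity for the escape rate with Theorem~\ref{t:trace}'' and, for the constant curvature case, refers to~\cite[Appendix~B]{DyGu}. Your write-up supplies the details the paper omits.

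Two minor remarks. First, your phrase ``enlarging $\Lambda_0$ slightly absorbs $\eta$'' is slightly backwards in wording: what you mean is that, given the target $\Lambda_0>\Lambda_{\max}$ in the statement, you apply Theorem~\ref{t:trace} with some $\Lambda_0'\in(\Lambda_{\max},\Lambda_0)$ and then choose $\eta>0$ so small that $(P(J^u)+\eta)/\Lambda_0'\le P(J^u)/\Lambda_0$; since $P(J^u)<0$ this is possible. The logic is fine, only the phrasing invites confusion.

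Second, the hypothesis in~\eqref{constcurv} is only that $g$ has curvature $-1$ \emph{near} $K$, with $M$ Euclidean at infinity; there is no group $\Gamma$ in sight, so Patterson--Sullivan is not quite the right reference for $h_{\mathrm{top}}(g^1|_K)=\delta$. The correct general statement is Bowen's dimension formula for conformal hyperbolic sets: since in curvature $-1$ the expansion on $E^u$ is isotropic with rate $1$, the Hausdorff dimension of an unstable slice of $K$ is the unique $s$ with $P(-s)=0$, i.e.\ $s=h_{\mathrm{top}}$, and then $\dim_H K=2h_{\mathrm{top}}+1$. This is exactly the content of~\cite[Appendix~B]{DyGu} that the paper points to, and it covers the general case you flagged.
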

%
%
When the flow is Axiom~A, one has $P(J^u)<0$ by
Bowen--Ruelle~\cite[Theorem~5]{BoRu}. We refer the reader
to~\cite[Appendix~B]{DyGu} for more discussions about the classical
escape rate and the Hausdorff dimension of the trapped set.
 
\subsection{Relation with resonances}

We also compare these asymptotics to the counting function of
resonances for the Laplacian $\Delta_g$ in regions close to the
continuous spectrum. By definition, resonances are poles of the
meromorphic continuation of the resolvent $R(z)=(\Delta_g-z^2)^{-1}$
from ${\rm Im}(z)<0$ to $z\in \cc$ as an operator mapping $L^2_{\rm
comp}(M)$ to $L^2_{\rm loc}(M)$. We show
%
%
\begin{theo}
  \label{usingresonance}
Let $(M,g)$ be a smooth Riemannian manifold which is isometric to
$\rr^{n+1}\setminus B(0,R_0)$ outside a compact set $N$, with $n+1$ odd,
and let $\mc{R}$ be the set of resonances of $\Delta_g$ in $\{{\rm
Im}(z)\geq 0\}$. If $t_0$ is the radius of injectivity of $g$, and if
for some $c>0$ satisfying $ct_0>n+1$ the following estimate holds for
$z>0$ large
\begin{equation}
  \label{e:required-fwl}
\sharp\{\rho \in \mc{R};  |z-\rho|\leq c\log(z)\}=\mc{O}(z^{\alpha}),
\end{equation}
for some $\alpha\in [0,n]$, then the spectral shift function defined
by \eqref{defofspect} satisfies the asymptotics as $z\to \infty$
\[
s(z)=\sum_{j=0}^{n+1} c_jz^{n+1-j}+\mathcal{O}(z^{\alpha}).
\]
\end{theo}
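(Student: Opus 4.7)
The plan is to represent the scattering phase through a Hadamard factorization of the scattering determinant and to exploit the localization provided by the injectivity-radius condition $ct_0 > n+1$ via a Tauberian argument at the logarithmic time scale $c\log z$.

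The starting point is Melrose's factorization theorem for the scattering matrix of compactly supported perturbations of $\rr^{n+1}$ in odd dimensions: $\det S(z) = e^{q(z)} \prod_{\rho \in \mc{R}} E_{n+1}(z,\rho)$, where $q$ is a polynomial of degree at most $n+1$ and $E_{n+1}$ is a Weierstrass-type factor of genus $n+1$ accounting for the resonances. Together with the Birman--Krein identity $\det S(z) = e^{-2\pi i s(z)}$, this produces, for large real $z$, a representation of the form
\[
s(z) = P(z) - \frac{2}{\pi}\sum_{\rho \in \mc{R}} \arctan\frac{\mathrm{Im}\,\rho}{z - \mathrm{Re}\,\rho} + \text{(subtraction terms)},
\]
where $P$ is a real polynomial of degree $n+1$ whose coefficients match $\sum_{j=0}^{n+1} c_j z^{n+1-j}$; the identification of $P$ with Robert's coefficients follows from the small-time heat/wave trace expansion, which is valid on $|t|<t_0$ by the Hadamard parametrix, and the subtraction terms in the Weierstrass factors contribute only lower-order polynomial corrections that can be absorbed into $P$.

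I would then split the resonance sum relative to $|\rho-z| \leq c\log z$. Each $\arctan$ term is bounded by $\pi/2$, so the hypothesis \eqref{e:required-fwl} immediately controls the near contribution by $\mc{O}(z^\alpha)$. The far contribution is handled by the Poisson formula for resonances in odd dimensions (Melrose, Sj\"ostrand--Zworski), $u(t) = 2\sum_{\rho} e^{-i\rho|t|}$, convolved against a test function $\psi$ whose Fourier transform is supported in $(-ct_0\log z,\,ct_0\log z)$. The singularities of $u$ for $|t|<t_0$ reproduce $P(z)$ via the Hadamard parametrix, and the contribution from $t_0 \leq |t| \leq c t_0 \log z$ is controlled by Melrose's polynomial counting bound $\#\{|\rho|\leq R\} = \mc{O}(R^{n+1})$ combined with the damping $e^{-|t|\,\mathrm{Im}\,\rho}$; the arithmetic condition $ct_0 > n+1$ is precisely what makes the resulting global tail $\mc{O}(z^\alpha)$ rather than larger.

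The main difficulty I foresee is the Tauberian unsmoothing step: once $s\ast \psi - P$ is known to be $\mc{O}(z^\alpha)$, recovering the pointwise bound $s(z)-P(z) = \mc{O}(z^\alpha)$ requires avoiding the classical logarithmic Tauberian loss. Exploiting the near-monotonicity of $s(z)-P(z)$ in resonance-sparse regions, together with a quantitative Tauberian lemma adapted to logarithmic windows, should complete the argument; here again the inequality $ct_0>n+1$ provides the buffer between the Melrose global density $R^{n+1}$ and the available logarithmic time budget.
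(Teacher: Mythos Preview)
Your proposal conflates two distinct routes, and the first one has a genuine gap. In the Hadamard-factorization picture you claim that ``the subtraction terms in the Weierstrass factors contribute only lower-order polynomial corrections that can be absorbed into $P$.'' This is false: the genus-$(n{+}1)$ subtraction polynomial in each factor $E_{n+1}(z,\rho)$ carries coefficients proportional to $\rho^{-k}$ for $1\le k\le n{+}1$, and by the global bound $\#\{|\rho|\le R\}=\mc O(R^{n+1})$ the sums $\sum_\rho \rho^{-k}$ diverge for every such $k$. The $\arctan$ terms and the subtraction terms only converge \emph{together}; you cannot isolate a ``far'' $\arctan$ tail and estimate it on its own. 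Your third paragraph then silently switches to the Poisson formula with a $z$-dependent time cutoff, but that is a different argument, not a bound on the far $\arctan$ sum; and the Tauberian unsmoothing you flag at the end is left unresolved (there is no near-monotonicity of $s-P$ available to exploit).

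The paper's proof stays entirely on the wave-trace side and avoids both issues. One works with $u(t)=2\Tr(\cos t\sqrt{\Delta_g}-\cdots)$ and the Poisson formula $u(t)=\sum_\rho e^{i\rho|t|}$ for $t\neq 0$, cutting at $|t|<t_0$ by a \emph{fixed} function $\theta$ (the Hadamard parametrix produces the polynomial part there). The resonances are then split by the $z$-\emph{independent} criterion $\mc R_c=\{\rho:\mathrm{Im}\,\rho\le 1+c\log|\mathrm{Re}\,\rho|\}$. The complementary piece $v_c(t)=(1-\theta)\sum_{\rho\notin\mc R_c}e^{i\rho|t|}$ is a fixed function of $t$; the condition $ct_0>n+1$ combined with the global $\mc O(R^{n+1})$ bound makes $v_c(t)/t\in L^1$, so $\int_0^z\hat v_c=\mc O(1)$ with no Tauberian loss. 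For $u_c$ one shows $\hat u_c=q_c\star(2\pi\delta_0-\hat\theta)$ modulo a symbol, with $q_c(z)=\sum_{\rho\in\mc R_c}\psi(|\rho|/z)\,2\,\mathrm{Im}\rho/|z-\rho|^2$, and controls the increments $Q_c(z)-Q_c(z-w)$ of its primitive directly from the hypothesis~\eqref{e:required-fwl}; convolving against $\hat\theta$ then gives the pointwise $\mc O(z^\alpha)$ without any unsmoothing step.
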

%
%
The proof uses some ideas of Melrose~\cite{Me1} by expressing the
derivative of $s(z)$ in terms of resonances through a Poisson formula.
Dimassi~\cite{Di} showed that $s(z)$ has a full expansion if there are
no resonances in a logarithmic neighbourhood of the real axis; our
result is (in a way) a generalization of~\cite{Di}. 

Sj\"ostrand--Zworski~\cite{SjZw1} proved that if the geodesic flow is
hyperbolic on $K$, then
\begin{equation}
  \label{e:fwl}
\sharp\{\rho\in \mc{R}; |z-\rho|\leq c\}=\mc{O}(z^{m})
\end{equation}
for all $m$ satisfying $2m+1>\dim_M(K)$, where $\dim_M$ denotes the
Minkowski dimension (in case of pure dimensions, $m$ can be taken to
be $(\dim_M(K)-1)/2$). For semiclassical operators with analytic
coefficients satisfying hyperbolicity assumptions,
Sj\"ostrand~\cite[Theorem~4.6]{Sj0} gives estimates for numbers of
resonances in boxes of real part of size $z^{1/2}$ and imaginary part
of any size between $1$ and $o(z)$; it is feasible that in combination
with the second microlocalization analysis of~\cite{SjZw1} to handle
the real parts, this method gives in particular the
estimate~\eqref{e:required-fwl} with $\alpha$ any number such that
$2\alpha+1>\dim_M(K)$. It is not clear whether an estimate of the
type~\eqref{e:fwl} for resonances in the logarithmic
regions~\eqref{e:required-fwl} holds for general $C^\infty$ metrics.

\subsection{Hyperbolic quotients}

In the hyperbolic near infinity setting, there is no natural model
operator to define a spectral shift function, however in even
dimension one can use the construction of the second
author~\cite{GuAJM}. In dimension $2$, Guillop\'e--Zworski~\cite{GZ2}
defined also a relative scattering phase $s(z)$ by using the Laplacian
on the funnels with Dirichlet condition on a curve separating the
funnels and a compact region of $M$. In constant curvature
$\Gamma\backslash\hh^2$, this curve can be taken to be the boundary of
the convex core, made of disjoint closed geodesics, and $s(z)$ is
related to the argument of the Selberg zeta function
$Z_\Gamma(1/2+iz)$ of $\Gamma$ for $z>0$. Using $Z_\Gamma(\la)$, the
estimate on its growth in strips proved by
Guillop\'e--Lin--Zworski~\cite{GLZ}, and some classical results of
holomorphic functions, we can compare our microlocal result to see that,
at least without more dynamical assumptions, our estimate seems almost sharp:
%
%
\begin{theo}
  \label{t:scattering-phase-special}
Let $M=\Gamma\backslash\hh^2$ be a convex co-compact hyperbolic
surface and let $s(z)$ be the scattering phase defined in~\cite{GZ2}
using the Laplacian on the funnels with Dirichlet condition at the
boundary of the convex core. Let $\delta\in(0,1)$ be the Hausdorff
dimension of the limit set of $\Gamma$, or equivalently $2\delta+1$ is
the Hausdorff dimension of the trapped set $K$. Then:\\
1) $s(z)$ satisfies the asymptotics as $z\to +\infty$
\[
s(z)=\frac{1}{4\pi} {\rm Vol}(N)z^2 +R(z), \quad |R(z)|=\mc{O}(z^\delta),
\quad \Big|\int_{0}^zR(t)dt\Big|=\mc{O}(z^\delta)
\] 
where $N$ is the convex core of $M$.\\
2) the Breit--Wigner formula holds: for 
all $z\in [T-\sigma,T+\sigma]$ with $T$ large and $\sigma$ fixed 
\[
\pl_zs(z)=\frac{1}{4\pi}\vol(N)z^2+\frac{1}{\pi}\sum_{s\in \mc{R}_{\sigma+1}}
\frac{{\rm Im}(\rho)}{|z-\rho|^2}+\mc{O}(T^\delta)
\]
where $\mc{R}_{\sigma+1}$ denotes the set of resonances in the disc of radius $\sigma+1$ centered at $T$.\\
3) If $\Gamma$ is an infinite index sugroup of an arithmetic group
derived from a quaternion algebra, and if $\delta>3/4$, then for all
$\eps>0$, $R(z)$ is {\bf not} an $\mc{O}(z^{2\delta-3/2-\eps})$.
\end{theo}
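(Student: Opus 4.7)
The plan is to exploit the relation between $s(z)$ and the Selberg zeta function $Z_\Gamma$ established in~\cite{GZ2}: up to an explicit polynomial in $z$, the scattering phase has the form
\[
s(z) = P(z) + \frac{1}{\pi}\arg Z_\Gamma(1/2+iz) + \mc{O}(1),
\]
where the polynomial contribution produces the Weyl term $\frac{1}{4\pi}\vol(N)z^2$. All three conclusions then reduce to analytic estimates on $\arg Z_\Gamma$ along the critical line $\re\la=1/2$, and the decisive input for parts~1 and~2 is the Guillop\'e--Lin--Zworski strip bound $\log|Z_\Gamma(\la)|=\mc{O}(|\la|^\delta)$ of~\cite{GLZ}, together with the fact that the zeros of $Z_\Gamma$ coming from resonances all lie in the closed half-plane $\re\la\leq 1/2$.

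For part~1, I would bound $\arg Z_\Gamma(1/2+iz)=\mc{O}(z^\delta)$ by the classical recipe used for $\arg\zeta(1/2+it)$: first use $Z_\Gamma(\la)=1+\mc{O}(e^{-c\re\la})$ as $\re\la\to+\infty$ to fix the determination of $\arg Z_\Gamma$ at a base point far to the right; then represent $\arg Z_\Gamma(1/2+iz)$ as the imaginary part of a contour integral of $Z_\Gamma'/Z_\Gamma$ from the base point to $1/2+iz$ running through the zero-free region $\re\la>1/2$; finally, apply Borel--Carath\'eodory on a disc of fixed radius around $1+iz$ to convert the one-sided GLZ bound on $\re\log Z_\Gamma$ into the two-sided estimate $|\log Z_\Gamma|=\mc{O}(z^\delta)$. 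For the integrated estimate $|\int_0^zR(t)\,dt|=\mc{O}(z^\delta)$, I would reverse the order of operations: up to polynomial boundary terms, $\int_0^z\arg Z_\Gamma(1/2+it)\,dt$ can be rewritten by Cauchy's theorem as a real combination of values of $\log Z_\Gamma$ on the vertical segments $\re\la=1/2$ and $\re\la=c$ for some fixed $c>1/2$, and the GLZ bound then supplies $\mc{O}(z^\delta)$ with no argument-principle subtleties.

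For part~2, I would differentiate the Hadamard factorization of $Z_\Gamma$ (which is entire of order~$2$) and use the correspondence $\la=1/2+i\rho$ between zeros of $Z_\Gamma$ and resonances $\rho$ in the $z$-plane; a short computation then gives
\[
\pl_z\arg Z_\Gamma(1/2+iz)=\sum_\rho\frac{\im\rho}{|z-\rho|^2}+Q(z),
\]
where $Q$ absorbs the Weierstrass counterterms and the Hadamard polynomial factor. Isolating the resonances in $\mc{R}_{\sigma+1}$ from those with $|\rho-T|>\sigma+1$ yields the stated Lorentzian sum; the tail I would control by dyadic decomposition over annular shells and horizontal strips, combined with the resonance counting estimate of~\cite{GLZ}, the dyadic sums telescoping to $\mc{O}(T^\delta)$.

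Part~3 is the main obstacle, since it requires arithmetic input going beyond the GLZ bound. My plan is to apply a smoothed explicit formula derived from the Selberg trace formula on $\Gamma\backslash\hh^2$, which equates a Fourier-type transform of $R(z)$ against a weighted sum over the length spectrum of $\Gamma$. For $\Gamma$ derived from a quaternion algebra, primitive geodesic lengths occur with anomalously high multiplicities, so the geometric side carries unusually large peaks. Assuming for contradiction that $R(z)=\mc{O}(z^{2\delta-3/2-\eps})$ and inserting into the explicit formula with a suitably calibrated test function produces an upper bound on these peaks which, once $\delta>3/4$, becomes incompatible with the arithmetic lower bound. The delicate point is choosing the test function so that the spectral and geometric sides meet exactly at the exponent $2\delta-3/2$, in the spirit of the Sarnak-type $\Omega$-arguments for $\arg\zeta(1/2+it)$.
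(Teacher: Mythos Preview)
Your overall strategy matches the paper's: reduce everything to $\arg Z_\Gamma(1/2+iz)$ via the identity $s'(z)=\xi'(z)-\xi'_F(z)$ (the funnel zeta contributions $\xi_{F_j}$ being $\mc{O}(1)$ by absolute convergence), and then feed in the Guillop\'e--Lin--Zworski strip bound $\log|Z_\Gamma|=\mc{O}(|\la|^\delta)$. For part~3 you also correctly identify the Jakobson--Naud mechanism; the paper simply restates their Proposition, pairing $\pl_z\arg Z_\Gamma(1/2+iz)$ against a test function $\widehat{\varphi_{t,\alpha}}$ and using the arithmetic lower bound $\int_T^{3T}|S_{\alpha,t}|^2\,dt\geq CT^{4\delta-2}/(\log T)^2$ to contradict $R(z)=\mc{O}(z^{2\delta-3/2-\eps})$.

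The implementation differs in the complex-analytic tools. For part~1 the paper does not use Borel--Carath\'eodory but the Titchmarsh argument (\cite[Section~9.4]{Tit}): one first checks that for $N$ large $\re Z_\Gamma(N+iz)\geq 1/2$ uniformly in $z$ (a short Euler-product estimate), then bounds the variation of $\arg Z_\Gamma$ along the horizontal segment $[1/2+iz,\,N+iz]$ by counting sign changes of $\re Z_\Gamma$, which Jensen's formula controls by $\log M_{\sigma_0,z}=\mc{O}(z^\delta)$. Your Borel--Carath\'eodory sketch needs care here, since resonance zeros can sit on or arbitrarily close to $\re\la=1/2$, so $\log Z_\Gamma$ is not holomorphic on discs reaching that line; the Titchmarsh--Jensen route sidesteps this. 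For the integrated bound the paper applies the rectangle identity (``Hadamard's Lemma''/Littlewood) on $[1,T]+i[0,N]$, which is exactly your Cauchy-theorem idea.

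For part~2 the paper takes a shorter route than your global Hadamard factorization: it applies Lemma~$\alpha$ of \cite[Section~3.9]{Tit} to $Z_\Gamma$ on a large disc $D(N+iz,2N)$, obtaining directly
\[
\Big|\frac{Z_\Gamma'(\la)}{Z_\Gamma(\la)}-\sum_{s\in\mc R_{\sigma+1}}\frac{1}{\la-s}\Big|\leq C z^\delta,\qquad \la\in D(1/2+iz,\sigma),
\]
from only the GLZ upper bound and the pointwise lower bound $|Z_\Gamma(N+iz)|\geq 1/2$. This packages your dyadic tail estimate and the Weierstrass counterterms into one classical lemma, and spares you from tracking the topological zeros of $Z_\Gamma$ at negative integers that the global factorization would force you to treat separately. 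Your approach would work, but is noticeably longer.
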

%
%
The statement 3) is essentially restating a result proved by
Jakobson--Naud~\cite{JaNa}. A result similar to Theorem
\ref{t:scattering-phase-special} (except statement 3)) is also true
for even dimensional convex co-compact hyperbolic Schottky manifolds,
see Theorem~\ref{resultn+1} in Section~\ref{s:krein}.

When $\delta<1/2$ in dimension~$2$, there is actually an exact formula
for $s(z)$ as a converging infinite sum over closed geodesics,
see~\eqref{whendelta<n/2}.

\subsection{Previous results}
 
The asymptotics for the scattering phase has a long history, we will
only give references related to the settings we use in this paper.
The first result giving an asymptotics with the leading term was
announced in Buslaev~\cite{Bu} for a perturbation of $\rr^{d}$ by a
non-trapping obstacle, this was shown with non-sharp remainder in
Majda--Ralston~\cite{MaRa} for the strictly convex case, and by
Jensen--Kato~\cite{JeKa} for star-shaped domains; an asymptotics with
remainder $\mc{O}(z^{d-3})$ was proved for non-trapping compact metric
perturbations in Majda--Ralston \cite{MaRa2}. Then, Petkov--Popov
\cite{PePo} gave a full asymptotic expansion in powers of $z$ for
non-trapping obstacle perturbations of $\rr^{d}$ and a sharp estimate
with $\mc{O}(z^{d-1})$ remainder was shown by Melrose~\cite{Me1} for
general obstacles $\Omega$ in odd dimensions using the counting
function of resonances in large disks of $\cc$. When the set of
transversally reflected geodesics is of measure $0$,
\cite{Me1} obtained a second term of the form ${\rm
Vol}(\pl\Omega)z^{d-1}$ with remainder $o(z^{d-1})$ using a
Duistermaat--Guillemin/Ivrii type argument.

Robert~\cite{Ro} proved a very general result for (not necessarily
compactly supported) second order perturbations of the Euclidean
Laplacian on $\rr^{d}$ with a remainder $\mc{O}(z^{d-1})$ and with a
complete expansion in powers of $t$ for non-trapping perturbations.
In the black-box compact perturbation setting, Christiansen~\cite{Chr}
gave an estimate with a sharp remainder. In the semiclassical setting,
the work of Bruneau--Petkov~\cite{BrPe} extends the results
of~\cite{Ro,Chr}. The Breit--Wigner formula relating resonances and
scattering phase in the semi-classical setting appears in particular
in~\cite{BrPe} and in Petkov--Zworski~\cite{PeZw}. In the setting of
hyperbolic surfaces with funnels, Guillop\'e--Zworski~\cite{GZ2}
proved a Weyl asymptotics with remainder $\mc{O}(z)$ (in fact they
dealt with $n_c$ cusps as well, in which case there is a second term
$-\frac{n_c}{\pi}z\log(z)$ before $\mc{O}(z)$). For convex co-compact
hyperbolic manifolds $\Gamma\backslash \hh^{n+1}$, the second
author~\cite{GuAJM} gave an asymptotics with remainder $\mc{O}(z^{n})$
for the Krein spectral function, and Borthwick~\cite{Bo} extended it
to compact perturbations.

\section{Asymptotics of the spectral shift function for compact perturbations of the Euclidean Laplacian}
\label{s:scattering-phase}

\subsection{Asymptotics using the trace estimate of~\cite{DyGu}}

Let $(M,g)$ be a complete Riemannian manifold satisfying the
assumptions of Theorem~\ref{t:trace}. We put $(M_1,g_1)=(M,g)$ and
$(M_0,g_0)$ to be $\mathbb R^{n+1}$ with the Euclidean metric; let
$N_1=N\subset M_1$ and $N_0=B(0,R_0)\subset M_0$, so that
$(M_1\setminus N_1,g_1)$ is isometric to $(M_0\setminus N_0,g_0)$.  We
define $s(z)$ by duality like in \eqref{defofspect} using the
black-box trace: with the notation $[A_j]_{1}^0=A_0-A_1$, the
black-box trace is defined by
\begin{equation}
  \label{bbtrace}
\begin{split}
\tra_{\rm bb}(\varphi(\sqrt{\Delta_{g_0}})-\varphi(\sqrt{\Delta_{g_1}}))
:=&\big[\Tr(\chi_j\varphi(\sqrt{\Delta_{g_j}})\chi_j )\big]_{1}^0+\big[\Tr((1-\chi_j) \varphi(\sqrt{\Delta_{g_j}})\chi_j)\big]_{1}^0\\
&+\big[\Tr(\chi_j \varphi(\sqrt{\Delta_{g_j}})(1-\chi_j))\big]_{1}^0\\
&+\tra(\big[(1-\chi_j)\varphi(\sqrt{\Delta_{g_j}})(1-\chi_j)\big]_{1}^0).
\end{split}
\end{equation}
Here $\chi_j\in C_0^\infty(M_j)$ is a function which equals $1$ near
$N_j$, and $(1-\chi_1)=(1-\chi_2) \in C^\infty(M_j\setminus N_j)$. All
the traces exist by easy arguments, see~\cite{Sj}, and in the last
trace, we use the identification~$M_1\setminus N_1\sim M_0\setminus
N_0$. The definition agrees with \eqref{defspectral} when $M_1=M_2$.
In this setting, $s(z)$ is always an analytic function on
$[0,\infty)$, this can be seen for instance by using the Birman--Krein
formula relating $s(z)$ with the argument of the determinant of the
relative scattering operator (which is analytic and of modulus $1$ on
the real line).\\

Let us first recall Theorem~4 in~\cite[Section~5.3]{DyGu}. We refer to
\cite{d-s,e-z} or \cite[Section~3]{DyGu} for the definition of
semiclassical pseudodifferential operators $\Psi^{*}(M)$ and
semiclassical quantizations ${\rm Op}_h$.
%
%
\begin{theo}\label{asympofs_A}
Let $(M,g)$ be a complete Riemannian manifold satisfying the
assumptions of Theorem \ref{t:trace} and $\Lambda_0>\Lambda_{\max}$
with $\Lambda_{\max}$ defined in \eqref{lamax}. Let $P(h)=h^2\Delta_g$
and let $A={\rm Op}_h(a)\in \Psi^0(M)$ be a compactly supported
semi-classical pseudodifferential operator. Then there exist some
differential operators $L_j$ of order $2j$ on $T^*M$, depending on the
quantization procedure ${\rm Op}_h$, with $L_0=1$, such that for all
$L\in\nn$, all $\la>0$, we have as $h\to 0$
\[\begin{split}
{\rm Tr}(A\indic_{[0,\la]}(P(h))= & (2\pi h)^{-n-1}\sum_{j=0}^L h^j
\int\limits_{|\nu|_g^2\leq \la}L_ja\, d\mu_\omega+h^{-n}\mc{O}\big(\mu_L(\mc{T}(\Lambda_0^{-1}|\log h|)+h^{L}\big)
\end{split}\]
where the remainder is uniform for $\la$ in a compact interval of $(0,\infty)$
and $\mu_\omega$ is the symplectic volume form on $T^*M$.
\end{theo}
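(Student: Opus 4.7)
The statement is recalled from the authors' earlier work \cite{DyGu}, so the plan is really to sketch how the trace estimate with fractal remainder is established. The fundamental strategy is a microlocal splitting into a ``smooth'' functional calculus part that produces the polynomial Weyl expansion and a ``boundary'' part at the energy level $\lambda$ that produces the dynamical remainder via propagation up to the Ehrenfest time $\Lambda_0^{-1}|\log h|$.

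First, I would write $\indic_{[0,\lambda]}=\psi+\rho$, where $\psi\in C_c^\infty(\rr)$ equals $\indic_{[0,\lambda]}$ outside a window of size $\sim h$ around $\lambda$ and $\rho$ is supported in that window. For the smooth piece, a Helffer--Sj\"ostrand representation together with the composition calculus for compactly supported $A\in\Psi^0(M)$ gives $\tra(A\psi(P(h)))$ as an oscillatory integral whose stationary phase expansion produces $(2\pi h)^{-n-1}\sum_j h^j\int L_j a\,d\mu_\omega$ with universal differential operators $L_j$ (the $L_j$ reflect the quantization procedure, and the expansion can be truncated at any order $L$ with error $\mc O(h^{L-n})$). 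This is the standard local Weyl law and produces the main term.

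The core issue is the boundary piece $\tra(A\rho(P(h)))$. Writing $\rho(P(h))=(2\pi h)^{-1}\int \hat\rho(t/h)\, e^{-itP(h)/h}\,dt$ and inserting a cutoff $\chi(t/T)+(1-\chi(t/T))$ with $T=\Lambda_0^{-1}|\log h|$ splits the integral. On the bounded time interval $|t|\leq T_0$ a semiclassical trace formula again contributes to the polynomial expansion. On $T_0\leq |t|\leq T$, an Egorov theorem propagates $A$ to $U(-t)AU(t)$, which remains a pseudodifferential operator with principal symbol $a\circ g^t$; the choice $\Lambda_0>\Lambda_{\max}$ is precisely what keeps the symbol derivatives of size $o(h^{-1})$ on the support of the propagated symbol, so the calculus stays valid up to the Ehrenfest time. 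Taking the trace over a spectral window of width $\sim h$ then yields
\[
|\tra(AU(-t)AU(t)\rho(P(h)))|\lesssim h^{-n}\,\mu_L\big(\supp a\cap g^{-t}(\supp a)\cap\{p\sim\lambda\}\big),
\]
and since $a$ is compactly supported and $M\setminus N$ is Euclidean, any orbit remaining in $\supp a$ for time $t$ contributes to $\mc T(|t|)$. Integrating in $t$ over $[T_0,T]$ the bound is governed by $\mu_L(\mc T(\Lambda_0^{-1}|\log h|))$, giving the advertised remainder.

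The contribution of $|t|\geq T$ must then be shown to be $\mc O(h^L)$: here one uses the rapid decay of $\hat\rho$ (or an almost analytic extension argument) combined with resolvent bounds outside the continuous spectrum, yielding an error that can be made $\mc O(h^L)$ for any $L$. The main obstacle is the second step, namely running Egorov's theorem all the way up to the Ehrenfest time while keeping a usable symbol calculus; this is exactly what forces the hypothesis $\Lambda_0>\Lambda_{\max}$ and what requires the full strength of the second microlocal/anisotropic tools developed in \cite{DyGu}. Once this is in place, the first and third steps are comparatively routine, and the fractal remainder emerges naturally from the volume of $\mc T(T)$.
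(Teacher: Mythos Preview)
You correctly observe that the paper does not prove this statement at all: it is quoted verbatim as Theorem~4 of~\cite[Section~5.3]{DyGu} and then used as a black box in the proof of Theorem~\ref{t:trace}. So there is no ``paper's own proof'' to compare against, only the argument in~\cite{DyGu}. Your sketch captures the broad architecture of that argument (smooth functional calculus part giving the polynomial expansion, plus a boundary contribution at energy $\lambda$ controlled by propagation to the Ehrenfest time), but it contains a genuine gap and a point of confusion.

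The gap is in your treatment of the sharp cutoff. You write $\indic_{[0,\lambda]}=\psi+\rho$ with $\psi\in C_c^\infty$, so $\rho$ carries the jump of $\indic_{[0,\lambda]}$ at $\lambda$ and is \emph{not} smooth. Consequently $\hat\rho$ decays only like $1/|\xi|$, and $\hat\rho(t/h)$ is of size $h/|t|$; there is no ``rapid decay of $\hat\rho$'' available to make the region $|t|\geq T$ contribute $\mc O(h^L)$. This is exactly the obstruction that forces a Tauberian step in~\cite{DyGu}: one does not analyze $\rho$ directly, but instead mollifies the counting function $N_A(\lambda)=\tra(A\indic_{[0,\lambda]}(P(h)))$ at scale $h$ by a Schwartz function $\phi$ whose Fourier transform is \emph{compactly supported} in $[-T,T]$ with $T=\Lambda_0^{-1}|\log h|$. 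The mollified derivative is then expressed through the propagator for times $|t|\leq T$ only, where the long-time Egorov theorem applies and yields the $\mu_L(\mc T(T))$ bound. One then passes from the smoothed counting function back to the sharp one by a Tauberian lemma exploiting monotonicity (after writing $A$ as a combination of nonnegative operators). Your outline omits this step entirely, and without it the argument does not close.

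A secondary point: the displayed bound on $\tra(AU(-t)AU(t)\rho(P(h)))$ has an extra copy of $A$ that does not arise from $\tra(A\rho(P(h)))$. Egorov is used in~\cite{DyGu} not to conjugate a second copy of $A$, but to control the Schwartz kernel of $Ae^{-itP/h}$ (equivalently, the local trace of $A$ against the smoothed spectral measure) via the FIO parametrix for $U(t)$, whose symbol is transported along the flow; the volume of $\mc T(t)$ then appears when integrating the diagonal of this kernel over the support of $a$.
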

%
%
Using Theorem~\ref{asympofs_A} and the commutator method of Robert \cite{Ro}, we will show   
Theorem~\ref{t:trace}.
%
%
\begin{proof}[Proof of Theorem~\ref{t:trace}]
Define $(M_1,g_1)$ and $(M_0,g_0)$ as in the beginning of this section
and put $\mathcal E=M_1\setminus N_1$, which is identified with
$B^c:=M_0\setminus N_0$.  Let $\chi_j\in C_0^\infty(M_j)$ be the
cutoff functions used in the definition~\eqref{bbtrace} of the black
box trace.  Let $\mc{A}$ be a smooth first order differential operator
supported in $\mc{E}$, which is equal to
$\mc{A}=\frac{1}{4}(m.\nabla+\nabla.m)$ on the support of $1-\chi_1$,
where $m\in \rr^{n+1}$ is the Euclidean coordinate in $\mc{E}$. A
computation shows that $[\Delta_{g_0},\mc{A}]=\Delta_{g_0}$ in $B^c$.
Therefore, since $\Delta_{g_1}=\Delta_{g_0}$ on $\mc{E}$, we get,
using the notation $[A_j]_{1}^0=A_0-A_1$,
\[
\begin{split}
\int_0^\infty\varphi(z^2) z^2\partial_z s(z)\,dz=& 
[\tra(\chi _j\Delta_{g_j}\varphi(\Delta_{g_j}))]_{1}^0+[\tra((1-\chi _j)\Delta_{g_j}\varphi(\Delta_{g_j})\chi_j)]_{1}^0\\
& +\tra([(1-\chi_j)[\Delta_{g_j},\mc{A}]\varphi(\Delta_{g_j})(1-\chi_j)]_{1}^0) 
\end{split}
\]
for all $\varphi\in C_0^\infty((0,\infty))$, and both traces make
sense. Using the argument of Robert~\cite[Appendice~A]{Ro}, it is
direct to see by a limiting argument (using cutoff functions supported
on large balls) and the cyclicity or the trace that
\[
\begin{split} 
\tra([(1-\chi_j)[\Delta_{g_j},\mc{A}]\varphi(\Delta_{g_j})(1-\chi_j)]_1^0)= 
\tra(Q_0\varphi(\Delta_{g_0}))
-\tra(Q_1\varphi(\Delta_{g_1}))
\end{split}
\]
with $Q_j:=((1-\chi_j)[\Delta_{g_j},\chi_j]+[\Delta_{g_j},\chi_j](1-\chi_j))\mc{A}$.
Now we deduce that
\[
\int_0^\infty\varphi(z^2) z^2\partial_z s(z)\,dz=\tra(S_0\varphi(\Delta_{g_0}))-\tra(S_1\varphi(\Delta_{g_1}))
\]
with  
\[
S_j:=Q_j+\chi _j\Delta_{g_j}+\chi_j(1-\chi _j)\Delta_{g_j}
\]
compactly supported second order differential operators. Let $\til{\chi}_j\in C_0^{\infty}(M_j)$ such that 
$\til{\chi}_j=1$ on $\supp \chi_j$, and in particular $\til{\chi}_jS_j=S_j$.
We then obtain for $z>0$
\[
\pl_zs(z)dz=z^{-2}(\tra(S_0d\Pi_{z^2}(\Delta_{g_0})\til{\chi}_0)-\tra(S_1d\Pi_{z^2}(\Delta_{g_1})\til{\chi}_1)
\]
where $d\Pi_{z}(\Delta_{g_j})$ is the spectral measure of
$\Delta_{g_j}$, i.e. such that $\varphi(\Delta_{g_j})=\int_0^\infty
\varphi(z)d\Pi_z(\Delta_{g_j})$ for all $\varphi\in
C_0^\infty(\rr)$. The traces here make sense since
$d\Pi_{z^2}(\Delta_{g_j})$ are operators with smooth kernels for $z>0$
(see e.g.~\cite[Section~6.2]{DyGu}) and $S_j$ and $\til\chi_j$ are
compactly supported.  Let $\eps>0$ be small and let $\psi\in
C_0^\infty((1-\eps,1+\eps))$ equal to $1$ on $[1-\eps/2,1+\eps/2]$,
and let $\varphi\in C_0^\infty((-1,1-\eps/2))$ be such that
$\psi(\la)+\varphi(\la)=1$ for $0\leq \la\leq 1$. Then for $h:=1/z$
small
\[
\begin{split}s(1/h)=&\int_0^{1/h}\psi(h^2u^2)\pl_u s(u)\,du
+\tra_{\rm bb}(\varphi(h^2\Delta_{g_0})-\varphi(h^2\Delta_{g_1}))
\\
= & [\tra(T_j\indic_{[0,1]}(h^2\Delta_{g_j})\til{\chi}_j)]_{1}^0+ \tra_{\rm bb}(\varphi(h^2\Delta_{g_0})-\varphi(h^2\Delta_{g_1}))
\end{split}
\]
where $T_j:=h^2S_j\til{\psi}(h^2\Delta_{g_j})$ with
$\til{\psi}(\la)=\psi(\la)/\la$ in $C_0^\infty((1-\eps,1+\eps))$. In
all terms above, one can remove the functions $\til{\chi}_j$ by
cyclicity of the trace and using $\til{\chi}_jS_j=S_j$.  Observe that
$T_j\in \Psi^{-\infty}(M)$ by using the Helffer-Sj\"ostrand functional
calculus~\cite[Chapters~8-9]{d-s}.  Moreover, since $S_j$ is a
compactly supported differential operator, $T_j$ can be decomposed
into the sum of a compactly supported semi-classical
pseudodifferential operator in $\Psi^{-\infty}(M)$ and of an operator
$W_j\in h^\infty\Psi^{-\infty}(M)$ with Schwartz kernel
$K_{W_j}(m,m')$ compactly supported in $m$ and decaying to infinite
order as $m'\to \infty$. Indeed, an operator in $\Psi^{-\infty}(M)$
has a smooth Schwartz kernel which is an
$\mc{O}((\frac{h}{d(m,m')})^{\infty})$ where $d(m,m')$ is the
Riemannian distance (this comes from integration by parts in the
oscillating integral defining the kernel). We then have
\[
|\tra(W_j\indic_{[0,1]}(h^2\Delta_{g_j}))|\leq \| W_j\|_{\tra}=\mc{O}(h^\infty)
\] 
where $\|\cdot\|_{\tra}$ is the trace norm.  We can then apply
Theorem~\ref{asympofs_A} with the compactly supported operator
$T_j-W_j\in \Psi^{-\infty}(M)$ instead of $T_j$ and we get an
expansion in powers of $h$ for
$\tra(T_1\indic_{[0,1]}(h^2\Delta_{g_1}))$ up to
$\mc{O}(h^{-n}\mu_L(\mc{T}(\Lambda_0^{-1}|\log h|)))$, and a complete
expansion for $\tra(T_0\indic_{[0,1]}(h^2\Delta_{g_0}))$ since $g_0$
has no trapped set.  The second term with $\varphi(h^2\Delta_{g_j})$
has a full expansion by~\cite[Th\'eor\`eme~2.1]{Ro}, this is a
consequence of the functional calculus for $h^2\Delta_{g_j}$, either
using the Helffer--Sj\"ostrand method~\cite[Chapters~8--9]{d-s} or the
Helffer--Robert~\cite{HeRo} approach. This achieves the proof.
\end{proof}
%
%
In fact, this argument would apply similarly to any situation where
the free Laplacian $\Delta_{g_0}$ on $\rr^{n+1}$ is replaced by a
self-adjoint operator $P_0$ satisfying the black-box
setting~\cite{Sj}, with $P_0=\Delta_{g_1}$ on functions supported
outside the black-box and for which there are expansions of
$\tra(A\Pi_{[0,z]}(P_0))$ in powers of $z$ up to
$\mc{O}(z^{n}\mu_L(\mc{T}(\Lambda_0^{-1}\log z)))$ when $A$ is a
compactly supported pseudo-differential operator.

\subsection{Asymptotics using resonances near the real axis}
\label{s:resonances}

For compact perturbations $P$ of the Euclidean Laplacian (this can be
metric, obstacle or potential) in odd dimension $n+1$, the resolvent
$R(z)=(P-z^2)^{-1}$ of $P$ extends meromorphically with poles of
finite rank from $\{{\rm Im}(z)<0\}$ to $\cc$, the poles are called
\emph{resonances} and form a discrete set $\mc{R}\subset \{{\rm
Im}(z)\geq 0\}$. We associate to each resonance $z_j$ its multiplicity
$m_j\in\nn$ as in~\cite{SjZw}. The following sharp upper bounds on the counting function of
resonances for metric perturbations have been proved by
Sj\"ostrand--Zworski~\cite{SjZw} and Vodev~\cite{Vo}
\begin{equation}
  \label{upperbound}
\sharp\{\rho \in \mc{R}; |\rho|\leq R\}=\mc{O}(R^{n+1}).
\end{equation}
%
%
\begin{proof}[Proof of Theorem~\ref{usingresonance}]
Let $\chi\in C_0^\infty(M)$ be a function which is equal to $1$ near
$N$, and define the tempered distribution on $\rr$
\[
u(t):=2\tra(\cos (t\sqrt{\Delta_g})-(1-\chi)\cos (t\sqrt{\Delta_{\rr^{n+1}}})(1-\chi)).
\]
By a straightforward computation, for all $\chi\in
C_0^\infty(\rr^{n+1})$, $\tra(\chi \cos (t\sqrt{\Delta_{\rr^{n+1}}}))$
is a distribution supported at $t=0$ and its Fourier transform is a
polynomial of order $n$. Thus, using the definition~\eqref{bbtrace} of
the black-box trace, we easily see that there exists a polynomial
$P_n$ of degree $n$ such that for all $\varphi\in
C_0^\infty((0,\infty))$
\[
\frac{1}{2\pi}\cjg\hat{u},\varphi\cjd=\tra_{\rm bb}(\varphi(\sqrt{\Delta_g})-\varphi(\sqrt{\Delta}))+
\cjg P_n,\varphi\cjd
=- \int_0^\infty (\pl_zs(z)-P_n(z)) \varphi(z)dz . 
\]
It then suffices to study the asymptotics of $\int_0^z \hat{u}(w)dw$
as $z\to \infty$. In~\cite[Theorem~2]{SjZwJFA}, Sj\"ostrand--Zworski proved the
following Poisson formula extending~\cite{Me0} (see also \cite[Theorem~2]{Zw0} for a direct proof without using Lax--Phillips theory)
\[
u(t)=\sum_{\rho\in \mc{R}}e^{i\rho |t|}, \quad t\not =0.
\]
Moreover, $u(t)$ is classically conormal at $t=0$, this follows from
writing the wave propagator as a (classical) Fourier Integral Operator
(see e.g.~\cite[Sections~17.4 and~17.5]{Ho}): if $\theta \in
C_0^\infty(-t_0+\delta,t_0-\delta)$ for $\delta>0$ small enough, $\theta(t)\in [0,1]$ 
and $\theta(t)=1$ in $[-t_0+2\delta,t_0-2\delta]$, then
\begin{equation}
  \label{smalltime}
\mc{F}(u(t)\theta(t))(z)\sim \sum_{k=0}^{\infty}a_k^{\pm }z^{n-k}\,\, \textrm{ as }z\to \pm \infty
\end{equation}
for some coefficients $a_k^\pm$ (which are integrals of local
invariants in the metric $g$). Here $\mathcal F$ denotes the Fourier
transform and $t_0$ is the injectivity radius of $M$.  Notice that, by
\eqref{upperbound}, if $L$ is large enough, $t^L\sum_{\rho\in
\mc{R}}e^{i\rho |t|}$ is a well defined distribution on $\rr$ and it
equals $t^Lu(t)$ (in fact $L$ can be taken to be $n$, see~\cite[Theorem~7]{Zw0}).

Let $c>0$ and define the region (see Figure~\ref{f:1})
%
%
\begin{figure}
\includegraphics{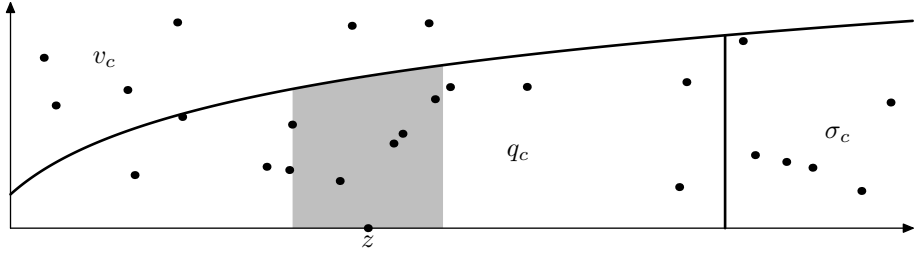}
\caption{The sets of resonances used in the proof of Theorem~\ref{usingresonance}.
We mark the regions corresponding to the sums $v_c$, $q_c$, and $\sigma_c$.
$\mathcal R_c$ is the region below the curve, while the shaded region
corresponds to $\mathcal N_{c,\log}(z)$.}
\label{f:1}
\end{figure}
%
%
\[
\mc{R}_c:=\{\rho\in \mc{R};\, {\rm Im}(\rho)\leq 1+ c\log|{\rm Re}(\rho)|\}.
\]
We decompose $u(t)(1-\theta(t))$ as 
\[
u(t)(1-\theta(t))=u_c(t)+v_c(t),\quad  \textrm{with }u_c(t)= \sum_{\rho\in \mc{R}_c}e^{i\rho|t|}(1-\theta(t)).
\]
Both terms are tempered distributions on $\rr$. 
From \eqref{upperbound}, one has  $v_c(t)\in L^p(\rr)$ for all $p>1$ if $ct_0>n+1$ and $\delta$ small enough: 
indeed
\[
|v_c(t)|\leq \sum_{\rho\in \mc{R}\setminus \mc{R}_c}|1-\theta(t)|e^{-{\rm Im}(\rho)|t|}
\leq C(1-\theta(t))\int_1^\infty R^{-c|t|+n} dR\leq \frac{C(1-\theta(t))}{c|t|-n-1}.
\]
This inequality also implies $|v_c(t)/t| \leq C(1+t^2)^{-1}$ and
therefore we get $\mc{F}(v_c(t)/t)\in L^\infty(\rr)\cap C^0(\rr)$.  We
thus obtain
\begin{equation}
  \label{vcint} 
\int_{0}^z\hat{v}_c(w)dw=i \mc{F}\Big(\frac{v_c(t)}{t}\Big)(z)-i\int \frac{v_c(t)}{t}dt \in L^\infty(\rr).
\end{equation}
In particular, the left-hand side of~\eqref{vcint} is $\mathcal O(z^\alpha)$ as $z\to \infty$.

It remains to consider the Fourier transform of $u_c$.  Let $\psi\in
C_0^\infty((-2,2))$ be equal to $1$ on $(-1,1)$, then we are going to
show that
\begin{equation}
  \label{hatuN}
\begin{gathered}\hat{u}_c(z)=2\pi\sigma_c(z)-\sigma_c\star \hat{\theta}(z)+2\pi q_c(z)-q_c\star\hat{\theta}(z) ,\\
 \quad \textrm{ with }q_c(z):=
\sum_{\rho\in \mc{R}_c}\psi\Big(\frac{|\rho|}{z}\Big)\frac{2{\rm Im}(\rho)}{|\rho-z|^2}
\end{gathered}
\end{equation}
and $\sigma_c(z)$ a symbol, that is there exists $L'$ such that 
for all $k\in\nn$, there is $C_k>0$ such that for all $z\in\rr$ 
\[
|\pl_z^k\sigma_c(z)|\leq C_k(1+|z|)^{L'-k}.
\]
The proof follows closely the argument of Melrose \cite{Me1}.
We have as tempered distribution
\[
\mc{F}(e^{i\rho|t|}+e^{-i\bar{\rho}|t|})(z)=\frac{2{\rm Im}(\rho)}{|z-\rho|^2}+\frac{2{\rm Im}(-\bar{\rho})}{|z+\bar{\rho}|^2}
\] 
and since the resonances are symmetric with respect to the imaginary axis, we have 
\[
\mc{F}\Big(t^L\sum_{\rho\in \mc{R}_c}e^{i\rho|t|}\Big)(z)=\sum_{\rho \in \mc{R}_c}(i\pl_z)^L\Big(\frac{2{\rm Im}(\rho)}{|z-\rho|^2}\Big)
\]
and these make sense as tempered distribution on $\rr$ by \eqref{upperbound}. Then 
\[
(i\pl_z)^L(\hat{u}_c(z)-q_c\star(2\pi\delta_0-\hat{\theta})(z))=\sum_{\rho\in \mc{R}_c}(i\pl_z)^L
 \Big(\Big(1-\psi\Big(\frac{|\rho|}{\cdot}\Big)\Big)\frac{2{\rm Im}(\rho)}{|\cdot-\rho|^2}\Big)\star(2\pi\delta_0-\hat{\theta})(z)
\]
and one can easily see that 
\[
\sigma_{c,L}(z):=\sum_{\rho\in \mc{R}_c}(i\pl_z)^L
 \Big(\Big(1-\psi\Big(\frac{|\rho|}{z}\Big)\Big)\frac{2{\rm Im}(\rho)}{|z-\rho|^2}\Big)
\]
is a symbol (see~\cite{Me1} for details) if $L$ is large enough. Integrate $\sigma_{c,L}(z)$ $L$ times, this 
defines a classical symbol with an order increasing by $L$, that we denote $\sigma_c(z)$ and such that
$(i\pl_z)^L(\sigma_c(z))=\sigma_{c,L}(z)$, and we therefore deduce that  
\[
\hat{u}_c(z)-q_c\star(2\pi\delta_0-\hat{\theta})(z)-\sigma_c\star(2\pi\delta_0-\hat{\theta})(z)=P_L(z)
\]
where $P_L(z)$ is a polynomial of order $L$. Taking inverse Fourier transform of the left hand side 
give a distribution vanishing near $t=0$ and therefore $P_L(z)=0$ necessarily. This shows \eqref{hatuN}.
Since for $z$ large $2\pi\sigma_c(z)-\sigma_c\star \hat{\theta}(z)=\mc{O}(z^{-\infty})$, we also get 
\begin{equation}
  \label{sigmaN}
\hat{u}_c(z)=q_c\star(2\pi\delta_0-\hat{\theta})(z)+\mc{O}(z^{-\infty}).
\end{equation}
An easy computation yields
\begin{equation}
  \label{calcul=pi} 
\forall a\leq b,\,\,\, \int_{a}^{b}\frac{{\rm Im}(\rho)}{|z-\rho|^2}dz \leq \pi
\end{equation} 
and if ${\rm Re}(\rho)+1 < a< b$ or $a<b<{\rm Re}(\rho)-1$ 
\begin{equation}
  \label{calcul2} 
\int_{a}^{b}\frac{{\rm Im}(\rho)}{|z-\rho|^2}dz \leq \frac{(b-a){\rm Im}(\rho)}{(b-{\rm Re}(\rho))(a-{\rm Re}(\rho))}.
\end{equation}
Define $\mc{N}_{c,\log}(z):=\sharp\{\rho\in \mc{R}_c; |z-{\rm Re}(\rho)|\leq \log(z)\}$ and set $Q_c(z)=\int_{0}^zq_c(w)dw$. Then we obtain from \eqref{calcul=pi} and \eqref{calcul2} that for $w\in[0,1]$ and $z\gg 1$
\[
|Q_c(z)-Q_c(z-w)|\leq C\Big(\mc{N}_{c,\log}(z)+ 
\sum_{\substack{\rho\in\mc{R}_c, |\rho|\leq 2z\\
|{\rm Re}(\rho)-z|>\log(z)}}\frac{{\rm Im}(\rho)}{(z-{\rm Re}(\rho))^2}\Big).
\] 
Using the assumption $\mc{N}_{c,\log}(z)=\mc{O}(z^{\alpha})$, we easily deduce for $z\gg 1$ and $w\in[0,1]$
\[|Q_c(z)-Q_c(z-w)|=\mathcal O(|z|^{\alpha}).\]
Thus iterating and using the same argument for $z<0$, we get for all $z, t\in\rr$ 
\[|Q_c(z)-Q_c(z-w)|\leq C(1+|w|)(1+|z|+|w|)^{\alpha}.\]
We multiply by $\hat{\theta}(t)$ and integrate to deduce 
\[|2\pi Q_c(z)-Q_c\star\hat{\theta}(z)|\leq C(1+|z|)^{\alpha}.\] 
It follows by~\eqref{sigmaN} that
$$
\Big|\int_0^z\hat u_c(w)\,dw\Big|\leq C(1+|z|)^{\alpha}.
$$
Combining this with \eqref{vcint} and \eqref{smalltime}, this yields
that as $z\to \infty$
\[
\int_0^z \hat{u}(w)dw =\sum_{j=0}^{n+1} c_jz^{n+1-j}+\mc{O}(z^\alpha)
\]
for some $c_j\in \rr$. This achieves the proof.
\end{proof}
%
%
Note that the proof applies for obstacles since in that case the small
time asymptotics of the wave trace is proved by Ivrii~\cite{Iv}.

\section{Scattering phase asymptotics for hyperbolic quotients}
  \label{s:k-1-s}

\subsection{Convex co-compact groups} 

A \emph{convex co-compact group} $\Gamma$ of isometries of hyperbolic
space $\hh^{n+1}$ is a discrete group of hyperbolic transformations
(their fixed points on $\bbar{\mathbb{B}}=\{m\in \rr^{n+1}; |m|\leq
1\}$ are $2$ disjoint points on $\mathbb S^n=\pl\bbar{M}$) with a
compact convex core.  The limit set $\Lambda_\Gamma$ of the group and
the discontinuity set $\Omega_\Gamma$ are defined by
\begin{equation}
  \label{limitset}
 \Lambda_\Gamma:=\overline{\{\gamma(m)\in \hh^{n+1}; \gamma\in \Gamma\}}\cap \mathbb S^n\,, \quad \Omega_\Gamma:=\mathbb S^n\setminus \Lambda_\Gamma\end{equation}
where the closure is taken in the closed unit ball $\bbar{\mathbb{B}}$
and $m\in \hh^{n+1}$ is any point.  The group $\Gamma$ acts on the
convex hull of $\Lambda_\Gamma$ (with respect to hyperbolic geodesics)
and the convex core is the quotient space. The group $\Gamma$ acts
properly discontinuously on $\Omega_\Gamma$ as a group of conformal
transformation and for convex co-compact groups, the quotient
$\Gamma\backslash \Omega_\Gamma$ is compact. The manifold
$M=\Gamma\backslash \hh^{n+1}$ admits a natural smooth
compactification given by $\bbar{M}= \Gamma\backslash (\hh^{n+1}\cup
\Omega_\Gamma)$, where the smooth structure is induced by that of the
closed unit ball $\bbar{\mathbb{B}}$ compactifying $\hh^{n+1}$. The
hyperbolic metric on $M$ is asymptotically hyperbolic in the sense of
Mazzeo--Melrose~\cite{MaMe}.  Let $x$ be a non-negative function which
is a boundary defining function for $\plM=\Gamma\backslash
\Omega_\Gamma$. We say that $x$ is a \emph{geodesic boundary defining
function} if $|d\log x|=1$ near $\plM$, where the norm is taken with
respect to the hyperbolic metric on $M$. Such functions exist, this is
proved for instance by Graham~\cite[Lemma 2.1]{Gr}.

An important quantity is the Hausdorff dimension of $\Lambda_\Gamma$
\begin{equation}
  \label{delta}
\delta:=\dim_{\rm H}\Lambda_{\Gamma} < n
\end{equation} 
which in turn is, by Patterson~\cite{Pa} and Sullivan~\cite{Su}, the
exponent of convergence of Poincar\'e series
\begin{equation}
  \label{patsull}
m\in \hh^{n+1}, \quad  \sum_{\gamma\in \Gamma} e^{-s d(m,\gamma m)}<\infty \iff s>\delta.
\end{equation}
where here and below, $d(\cdot,\cdot)$ is the distance for the
hyperbolic metric on $\hh^{n+1}$.  Note that the series is locally
uniformly bounded in $m\in\hh^{n+1}$. The Hausdorff dimension of the
trapped set $K$ of the geodesic flow on $\Gamma\backslash\hh^{n+1}$ is
given by $2\delta+1$, see Sullivan~\cite{Su}.

\subsection{Krein's type spectral function}
\label{s:krein}

In~\cite[Theorem~1.3]{GuAJM}, the second author defined on even
dimensional convex co-compact hyperbolic manifolds
$M=\Gamma\backslash\hh^{n+1}$ a generalized Krein function $\xi(z)$,
which is a regularized trace of the spectral projector
$\Pi_{[n^2/4,n^2/4+z^2]}(\Delta)$, and replaces the spectral shift
function in this setting. As we will see, it is related to the scattering phase.
It is defined as follows
\begin{equation}
  \label{defofxi}
\xi(z)=\int_0^{z^2} \tra_R(d\Pi_u(\Delta)), \quad z>0
\end{equation}
where $d\Pi_u(\Delta)$ is the spectral measure of the Laplacian,
i.e. $\int_{0}^zd\Pi_u(\Delta)=\indic_{[\frac{n^2}{4},\frac{n^2}{4}+u]}(\Delta)$,
and the regularized trace is defined using Hadamard finite part: let
$x$ be a geodesic boundary function in $\bbar{M}$, then if $T$ is an
operator with smooth integral kernel $T(m,m')$ such that the
restriction to the diagonal $m\mapsto T(m,m)$ has an asymptotic
expansion at $\pl\bbar{M}$ in powers of $x(m)$, we set
\[
\tra_R(T)={\rm FP}_{\eps\to 0}\int_{x>\eps} T(m,m)\Vol_g(m)
\]
and ${\rm FP}$ stands for finite part (ie. the coefficient of $\eps^0$
in the expansion as $\eps\to 0$).  The renormalized trace should a
priori depend on the choice of $x$ but in fact
$\tra_R(d\Pi_u(\Delta))$ does not depend on $x$, see~\cite{GuAJM}.  It
is natural to use this definition in this setting as there is no model
operator to compare with, unlike for perturbations of Euclidean
space. The function $\xi'(z)$ has meromorphic extension to $z\in \cc$
and $\xi$ is a phase of a regularized determinant of the scattering
operator $S(n/2+iz)$, see~\cite[Theorem~1.2]{GuAJM}. The following
identity is proved in~\cite[Theorem~1.3]{GuAJM} when $n+1$ is even
($\Gamma(\la)$ is the Euler function)
\begin{equation}
  \label{eqfunc}
\frac{Z_{\Gamma}(\ndemi-iz)}{Z_{\Gamma}(\ndemi+iz)}=\exp\Big(-2i\pi\xi(z)
+\chi(M)\frac{2i\pi(-1)^{\frac{n+1}{2}}}{\Gamma (n+1)}\int_{0}^z
\frac{\Gamma(\ndemi+it)\Gamma(\ndemi-it)}{\Gamma(it)\Gamma(-it)}dt +ci\pi \Big)
\end{equation} 
where $c\in\nn$ is a constant given by the multiplicity of $n/2$ as a
resonance, $\chi(M)$ is the Euler characteristic, $Z_{\Gamma}(\la)$ is
the Selberg zeta function, defined for $\Re(\la)>\delta$ (here
$\delta$ is defined in \eqref{delta})
\[
Z_{\Gamma}(\la)=\exp\left(-\sum_{\gamma}\sum_{m=1}^{\infty}\frac{1}{m}\frac{e^{-\la ml(\gamma)}}{G_\gamma(m)}\right)
\]
where $\gamma$ runs over the set $\mc{L}$ of primitive closed geodesics of $X$,
$\ell(\gamma)$ is the length of $\gamma$ and $G_\gamma(m):=e^{-\ndemi
ml(\gamma)}|\det(1-P_\gamma^m)|^{\demi}$ if $P_\gamma$ is the
Poincar\'e linear map associated to the primitive periodic orbit
$\gamma$ of the geodesic flow on the unit tangent bundle. The Selberg
zeta function extends to $\cc$ and has only finitely many zeros in
$\{{\rm Re}(\la)\geq \ndemi\}$, all on the real line by
Patterson--Perry~\cite{PP}. The zeros of $Z_\Gamma(\la)$ in $\{{\rm
Re}(\la)<n/2\}$ are the poles of the meromorphic continuation of the
resolvent $R(\la)=(\Delta-\la(n-\la))^{-1}$ of the Laplacian from
$\{{\rm Re}(\la)>n/2\}$ to $\cc$ (as a map $C_0^\infty(M)\to
C^\infty(M)$).  Keeping the convention in Section~\ref{s:resonances} for resonances,
we say that $\rho\in \{{\rm Im}(\rho)\geq 0\}$ is a resonance if this is a
pole of the meromorphic continuation of $z\to (\Delta-n^2/4-z^2)^{-1}$
from the lower half plane to $\cc$.  The function with Gamma factors
can be written as
\[
\frac{\Gamma(\ndemi+iz)\Gamma(\ndemi-iz)}{\Gamma(iz)\Gamma(-iz)}=\Big(\frac{1}{4}+z^2\Big)\dots
\Big(\big(\ndemi-1\big)^2+z^2\Big)z(1+\mc{O}(z^{-\infty}))
\] 
as $z\to \infty$. Notice that this is an odd polynomial modulo $\mc{O}(z^{-\infty})$. Since
$Z_{\Gamma}(\bar{\la})=\bbar{Z_{\Gamma}(\la)}$, we deduce that for $z>0$,
\[
\xi(z)=\chi(M)F_{n+1}(z)+\frac{1}{\pi}{\rm Arg}(Z_{\Gamma}(n/2+iz))+\demi c
\]
where $2i\pi F_{n+1}(z)$ is the  explicit function in factor of $\chi(M)$ in \eqref{eqfunc} (this is a 
polynomial modulo $\mc{O}(z^{-\infty})$), 
and ${\rm Arg}(Z_{\Gamma}(n/2+iz))$ is defined by 
\begin{equation}
  \label{argument}
{\rm Arg}(Z_{\Gamma}(n/2+iz))=\int_0^z {\rm Re}\Big(\frac{Z_{\Gamma}'(\ndemi+it)}{Z_{\Gamma}(\ndemi+it)}\Big)dt.
\end{equation}
 
\noindent\textbf{The case $\delta<n/2$}.
The series defining $Z_\Gamma(\ndemi+iz)$ converges uniformly for
$z\in \rr$ and therefore one obtains directly an explicit formula (in
this case $c=0$ as there is no resonance at $n/2$)
\begin{equation}
  \label{whendelta<n/2}
 \xi(z)=\chi(M)\frac{(-1)^{\frac{n+1}{2}}}{\Gamma (n+1)}\int_{0}^z \frac{\Gamma(\ndemi+it)\Gamma(\ndemi-it)}{\Gamma(it)\Gamma(-it)}dt +\frac{1}{\pi}\sum_{\gamma}\sum_{m=1}^{\infty}\frac{e^{-\ndemi m\ell(\gamma)}\sin(z\ell(\gamma))}{mG_\gamma(m)}.\end{equation}
The last term in the right hand side is $\mc{O}(1)$ as $z\to \infty$,
and by Gauss--Bonnet, we notice that $\chi(M)$ can be written as a
constant times a regularized volume ${\rm Vol}_R(M)$ of $M$, defined
by ${\rm Vol}_R(M)={\rm FP}_{\eps\to 0}\int_{x>\eps} 1 \,\Vol_g(m)$
(cf~\cite[Appendix~A]{PP}), corresponding to the usual Weyl type
asymptotics for spectral shift functions. The classical dynamic only
appears at order $z^0$ (with oscillations) in the asymptotic.\\
 
\noindent\textbf{The case $\delta\geq n/2$}. 
We can not use the convergence of $Z_\Gamma(\ndemi+iz)$ anymore. We
know by Guillop\'e--Lin--Zworski~\cite{GLZ} that for Schottky groups
$\Gamma$ (in particular all convex co-compact groups in dimension
$n+1=2$), one has for any $\sigma_0$, that there exists
$C_{\sigma_0}>0$ such that for all $z>1$ and $\sigma>\sigma_0$
\begin{equation}
  \label{estimateGLZ}
  \log |Z_{\Gamma}(\sigma+iz)|\leq C_{\sigma_0}z^{\delta}.
\end{equation}
%
%
\begin{lem}
  \label{ZN}
There exists $N>0$ such that for all $z\in\rr$, ${\rm Re}(Z_\Gamma(N+iz))>1/2$. 
\end{lem}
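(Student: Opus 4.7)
The plan is to exploit the absolutely convergent series defining $\log Z_\Gamma$ in the half-plane $\{\re(\la)>\delta\}$ and to show that $\log Z_\Gamma(N+iz)$ tends to $0$ as $N\to+\infty$, uniformly in $z\in\rr$. Once that is established, $Z_\Gamma(N+iz)$ sits in an arbitrarily small neighbourhood of $1$, and in particular its real part exceeds $\demi$.

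The first step is to use the defining expansion
\[
-\log Z_\Gamma(\la)=\sum_{\gamma}\sum_{m=1}^\infty \frac{e^{-\la m \ell(\gamma)}}{m\, G_\gamma(m)},\quad \re(\la)>\delta,
\]
and to observe that the factors $G_\gamma(m)^{-1}$ are uniformly bounded for $\gamma\in\mathcal L$ and $m\geq 1$. This uniform bound follows from convex co-compactness: the closed geodesics have length bounded below by $2$ times the injectivity radius in $N$, and the Poincar\'e maps $P_\gamma$ are uniformly hyperbolic, so the formula $G_\gamma(m)=e^{-\ndemi m\ell(\gamma)}|\det(1-P_\gamma^m)|^{\demi}$ gives $G_\gamma(m)\geq c_0>0$.

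The second step is the uniform-in-$z$ estimate
\[
|\log Z_\Gamma(N+iz)|\leq \sum_\gamma\sum_{m=1}^\infty\frac{e^{-Nm\ell(\gamma)}}{m G_\gamma(m)}=:S(N),
\]
for any $N>\delta$. By the Patterson--Sullivan convergence~\eqref{patsull}, $S(\delta+1)<\infty$, and by monotone convergence $S(N)\to 0$ as $N\to\infty$. Then I choose $N$ so large that $S(N)<\log(3/2)$; the inequality $|Z_\Gamma(N+iz)-1|\leq e^{|\log Z_\Gamma(N+iz)|}-1<\demi$ holds uniformly in $z\in\rr$, which gives $\re(Z_\Gamma(N+iz))>\demi$ as required.

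The only mildly non-trivial point in this outline is the uniform lower bound $G_\gamma(m)\geq c_0$; everything else is a direct estimate on a convergent series. Since that lower bound is a standard consequence of the geometry of a convex co-compact quotient, I expect no real obstacle.
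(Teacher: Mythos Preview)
Your argument is correct and follows the same line as the paper's: both bound $|\log Z_\Gamma(N+iz)|$ uniformly in $z$ by the series $S(N)$, use the uniform lower bound $G_\gamma(m)\geq c_0$ (the paper computes this explicitly as $(1-e^{-\ell(\gamma_0)})^n$ from the product formula $G_\gamma(m)=\prod_i(1-e^{-m\ell(\gamma)}\alpha_i(\gamma)^m)$), and then take $N$ large so that $Z_\Gamma(N+iz)$ lies near $1$. One small correction: your reference to~\eqref{patsull} is not quite on point, since that equation concerns the Poincar\'e series summed over the group $\Gamma$ rather than over primitive closed geodesics; the finiteness of $S(N)$ for $N>\delta$ is just the absolute convergence of the defining series for $\log Z_\Gamma$ in $\{\re(\la)>\delta\}$ already recorded in the paper, or equivalently the counting bound $\sharp\{\gamma\in\mc L:\ell(\gamma)\le R\}=\mc O(e^{\delta R})$, which the paper uses to get the explicit rate $S(N)\le Ce^{-N(\ell(\gamma_0)-\eps)}$ in place of your monotone-convergence argument.
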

\begin{proof}
From the definition of $Z_\Gamma(\la)$ for ${\rm Re}(\la)>\delta$, we have 
\[
{\rm Re}(Z_{\Gamma}(\la))=
\exp\left(-\sum_{m=1}^{\infty}\frac{1}{m}{\rm Re}(T_\la(m))\right)
\cos\left(\sum_{m=1}^{\infty}\frac{1}{m}{\rm Im}(T_\la(m))\right)
\]
where $T_\la(m)=\sum_{\gamma}e^{-\la m\ell(\gamma)}/G_\gamma(m)$.
Let $\la=N+iz$ and we shall take $N$ large. We claim that there is $N$ large enough and $C>0$ 
such that for all $m\in \nn,z\in \rr$  
\[
|T_{N+iz}(m)|\leq Ce^{-Nm\ell(\gamma_0)}.
\]
where $\gamma_0$ is the geodesic with the smallest length. This
follows directly  from a lower bound
$|G_{\gamma}(m)|\geq c_0$ for some $c_0>0$ independent of $m$ and
$\gamma$, and the estimate
\[
\sharp\{ \gamma \in \mc{L}; \ell(\gamma)\leq R\}=\mc{O}(e^{\delta R}), \quad R\to \infty.
\]
To get the lower bound of $|G_{\gamma}(m)|$, it suffices to write this
term explicitly: associated to the primitive geodesic $\gamma$ there
is a conjugacy class of hyperbolic isometries in $\Gamma$, and
$\gamma$ is conjugated by a hyperbolic isometry to the transformation
$(x,y)\mapsto e^{\ell(\gamma)}(O_\gamma(x),y),$ in the half space
$(x,y)\in \hh^{n+1}=\rr^n\times \rr_+$ where $O_\gamma \in
SO_n(\rr)$. Denoting by $\alpha_1(\gamma),\ldots,\alpha_n(\gamma)$ the
eigenvalues of $O_\gamma$, then
\[
G_\gamma(m)=\det \left(I-e^{-m\ell(\gamma)}O_\gamma^m \right)=
\prod_{i=1}^n \left(1-e^{-m\ell(\gamma)}\alpha_i(\gamma)^m \right).
\]
Therefore $|G_\gamma(m)|\geq (1-e^{-m\ell(\gamma_0)})^n\geq
(1-e^{-\ell(\gamma_0)})^n$ where $\ell(\gamma_0)>0$ is the length of
the shortest geodesic on $M$.

We conclude that for any $\eps>0$, there is $C>0$ such that for all
$z\in\rr$
\[
\sum_{m=1}^{\infty}\frac{1}{m}|T_{N+iz}(m)| \leq Ce^{-N(\ell(\gamma_0)-\eps)}
\]
and taking $N$ large enough and $\eps$ small enough, we deduce the
Lemma.
\end{proof}
%
%
One can obtain an approximation of the argument of
$Z_\Gamma(\ndemi+iz)$ in terms of its zeros, this follows from a
general result on holomophic functions:
%
%
\begin{lem}
\label{breitwigner}
Assume that $\Gamma$ is a Schottky group in dimension $n+1$ even with Hausdorff dimension of limit set $\delta$. 
Then for all $\sigma>0$ fixed, there is a constant $C>0$ depending on $\sigma$ 
such that for all $z>0$ large and all $\la$ in the disc
\[
D(n/2+iz,\sigma):=\{\la\in \cc ; |\la-n/2-iz|\leq \sigma\},
\] 
we have the identity
\begin{equation}
\label{z'/z}
\Big|\frac{Z'_\Gamma(\la)}{Z_{\Gamma}(\la)} -\sum_{s\in\mc{R}_{\sigma+1}}\frac{1}{\la-s}\Big|\leq Cz^\delta
\end{equation}
where $\mc{R}_{\sigma+1}$ is the set of zeros of $Z_{\Gamma}(\la)$ in $D(n/2+iz,\sigma+1)$.
\end{lem}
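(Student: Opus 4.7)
The plan is to combine the growth estimate \eqref{estimateGLZ}, the uniform lower bound from Lemma~\ref{ZN}, and a classical complex-analytic lemma controlling $f'/f$ in terms of the oscillation of $\log|f|$ and the zeros of $f$. Heuristically, $Z_\Gamma$ is entire of order controlled by $\delta$ in a neighbourhood of $n/2+iz$, so its logarithmic derivative there is dictated, up to an error of the same order $z^\delta$, by its zeros.

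Concretely, I first fix $N$ large enough for Lemma~\ref{ZN} to apply, and choose a constant $R>0$ (depending only on $N$, $n$ and $\sigma$) so that $D(n/2+iz,\sigma+1)\subset D(N+iz,R/4)$ and the larger disc $D(N+iz,R)$ lies in the half-plane $\{{\rm Re}(\la)>\delta\}$. By \eqref{estimateGLZ} one has $|Z_\Gamma(\la)|\leq e^{Cz^\delta}$ on $D(N+iz,R)$ for $z>0$ large, while Lemma~\ref{ZN} gives $|Z_\Gamma(N+iz)|\geq 1/2$. Let $\mc{R}'$ denote the set of zeros of $Z_\Gamma$ (with multiplicity) in $D(N+iz,R/2)$; Jensen's formula applied on these two concentric discs then bounds $\sharp \mc{R}'=\mc{O}(z^\delta)$.

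Next I would invoke the standard Carath\'eodory-type estimate (cf.\ Titchmarsh's \emph{Theory of Functions} \S3.9, or \cite[Lemma~1]{SjZw}): if $f$ is holomorphic on $D(w_0,R)$ with $f(w_0)\neq 0$ and $\mc{R}'$ is its zero set in $D(w_0,R/2)$, then on $D(w_0,R/4)$
\[
\Big|\frac{f'(\la)}{f(\la)}-\sum_{s\in \mc{R}'}\frac{1}{\la-s}\Big|\leq C\log\frac{\sup_{D(w_0,R)}|f|}{|f(w_0)|}.
\]
Applied to $f=Z_\Gamma$ with $w_0=N+iz$, the right side is $\mc{O}(z^\delta)$ throughout $D(n/2+iz,\sigma)\subset D(N+iz,R/4)$. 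Finally, to replace $\mc{R}'$ by $\mc{R}_{\sigma+1}\subset\mc{R}'$, each $s\in\mc{R}'\setminus\mc{R}_{\sigma+1}$ satisfies $|\la-s|\geq 1$ whenever $\la\in D(n/2+iz,\sigma)$, so the missing terms contribute at most $\sharp\mc{R}' = \mc{O}(z^\delta)$, yielding the required bound.

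The main (and essentially only non-routine) step is the Carath\'eodory-type estimate. Its proof goes by factoring $Z_\Gamma=g\cdot B$, where $B$ is a Blaschke product over the finitely many zeros in $D(w_0,R/2)$ and $g$ is holomorphic and non-vanishing on $D(w_0,R/2)$; then $\log|g|$ is harmonic with oscillation controlled by $\log(M/|f(w_0)|)$, and Cauchy or Harnack bounds on $(\log g)'$ give the estimate on the smaller disc $D(w_0,R/4)$. All remaining work is routine bookkeeping of the $z$-dependence.
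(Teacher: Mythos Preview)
Your approach is essentially the paper's: center at $N+iz$ with $N$ from Lemma~\ref{ZN}, use the Guillop\'e--Lin--Zworski bound \eqref{estimateGLZ} for the upper estimate, invoke the Carath\'eodory/Borel-type lemma from Titchmarsh \S3.9 (the paper cites exactly this as ``Lemma~$\alpha$''), and then discard the zeros in $\mc{R}'\setminus\mc{R}_{\sigma+1}$ using $|\la-s|\geq 1$ together with the $\mc{O}(z^\delta)$ bound on $\sharp\mc{R}'$.

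There is one slip. You require the large disc $D(N+iz,R)$ to lie in $\{\re\la>\delta\}$, but this is both unnecessary and, in the regime $\delta\geq n/2$ that the lemma is meant to cover, incompatible with your other condition $D(n/2+iz,\sigma+1)\subset D(N+iz,R/4)$: the first forces $R<N-\delta$, the second forces $R\geq 4(N-n/2+\sigma+1)$, and these cannot both hold for $N$ large. The fix is simply to drop the half-plane requirement: the estimate \eqref{estimateGLZ} is stated for \emph{any} fixed $\sigma_0$ (not only $\sigma_0>\delta$), so $|Z_\Gamma(\la)|\leq e^{Cz^\delta}$ already holds uniformly on $D(N+iz,R)$ for any fixed $R$. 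With that correction your proof goes through and matches the paper's.
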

\begin{proof}
We apply Lemma~$\alpha$ of~\cite[Section~3.9]{Tit} with the
holomorphic function $Z_{\Gamma}(\la)$ in a disc $D(N+iz,2N)$ where
$N>0$ is a large fixed parameter (chosen independent of $z$).  Using
the bound $|Z_{\Gamma}(\la)|\leq e^{Cz^\delta}$ of~\cite{GLZ} for
$\la\in D(N+iz,8N)$ where $C$ depends on $N$, and the lower bound
$|Z_{\Gamma}(N+iz)|>1/2$ for all $z>0$ if $N$ is large enough by Lemma~\ref{ZN},
we obtain that there exists $C'$ depending on $C,N$ such
that for all $\la\in D(N+iz,2N)$
\[
\Big|\frac{Z_{\Gamma}'(\la)}{Z_{\Gamma}(\la)} -\sum_{s\in\mc{R}_N}\frac{1}{\la-s}\Big|\leq C'z^\delta
\]
where $\mc{R}_N$ is the set of zeros of $Z_{\Gamma}(\la)$ in the disc
$D(N+iz,4N)$. To reduce to a sum
over $\mc{R}_{\sigma+1}$, it thus suffices to use that the number of
zeros of $Z_{\Gamma}(\la)$ in $D(N+iz,4N)$ is bounded by
$C''z^{\alpha}$  for some
$C''$ depends only on $N$ (by \cite{GLZ} again), and $|\la-s|\geq 1$ if $|\la-n/2-iz|\leq
\sigma$ and $s\in \mc{R}_N\setminus \mc{R}_{\sigma+1}$. This achieves
the proof.
\end{proof}
%
%
We also have an estimate on the argument of $Z_\Gamma(n/2+iz)$ by
using another Lemma in Titschmarch's book~\cite[Section~9.4]{Tit}.
%
%
\begin{lem}\label{Tit}
Fix $N>\sigma_0>0$. Let $f(z)$ be a holomorphic function in $\{{\rm
Re}(z)>0\}$, with $f(z)\in \rr$ for $z\in \rr$ and there exists $K>0$
such that $|{\rm Re} (f(N+it))|\geq K$ for all $t\in\rr$. Suppose that
$|f(\sigma'+it')|\leq M_{\sigma,t}$ for all $1\leq t'\leq t$ and
$\sigma'\geq \sigma$ with $\sigma\geq \sigma_0$. Then if $T$ is not
the imaginary part of a zero of $f$, we have
\[ |{\rm Arg}(f(\sigma+iT))|\leq C(\log(M_{\sigma_0,T+2})-\log(K))+3\pi/2\]
for some $C>0$ depending only on $\sigma_0$ and $N$.
\end{lem}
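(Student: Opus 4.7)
The plan is to follow the classical argument of Titchmarsh, Section~9.4, by writing
\[
{\rm Arg}(f(\sigma+iT))={\rm Arg}(f(N+iT))+\Delta_\gamma{\rm arg}(f),
\]
where $\gamma$ is the horizontal segment from $N+iT$ to $\sigma+iT$. The first term is bounded by a constant: since ${\rm Re}(f(N+it))$ is continuous in $t$ and satisfies $|{\rm Re}(f(N+it))|\geq K>0$, it has constant sign along the vertical line $\{{\rm Re}(s)=N\}$, and together with $f(N)\in\rr$ (from the reality assumption) this forces $f(N+it)$ to stay in one of the open half-planes $\{{\rm Re}>0\}$ or $\{{\rm Re}<0\}$, yielding $|{\rm Arg}(f(N+iT))|\leq\pi$. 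For the horizontal contribution, I would observe that between consecutive zeros of $\sigma'\mapsto{\rm Re}(f(\sigma'+iT))$ on $[\sigma,N]$ the curve $f(\sigma'+iT)$ remains in a fixed half-plane and hence the argument changes by less than $\pi$ on each subinterval, so $|\Delta_\gamma{\rm arg}(f)|\leq \pi(\nu+1)$, where $\nu$ counts zeros of ${\rm Re}(f(\cdot+iT))$ on $[\sigma,N]$.

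The core step is to bound $\nu$ by the logarithmic expression on the right-hand side. To this end I would introduce the auxiliary holomorphic function $h(s):=f(s+iT)+f(s-iT)$ and note, using Schwarz reflection $f(\bar z)=\overline{f(z)}$, that $h$ is real on $\rr$ and equals $2\,{\rm Re}(f(s+iT))$ there, so $\nu$ is exactly the number of real zeros of $h$ on $[\sigma,N]$. Applying Jensen's inequality to $h$ on a disc $D(N,R)$ with $R$ depending only on $N$ and $\sigma_0$ (chosen so the disc lies in $\{{\rm Re}(s)>0\}$ and so that ${\rm Im}(s\pm iT)\in[T-2,T+2]$, which forces $R\leq\min(N-\sigma_0,2)$), together with the reflection identity $|f(s-iT)|=|f(\bar s+iT)|$ to control the second summand, gives
\[
|h(N)|\geq 2K, \qquad \sup_{|s-N|=R}|h(s)|\leq 2M_{\sigma_0,T+2},
\]
and hence $\nu\leq C(\log M_{\sigma_0,T+2}-\log K)$ for some $C$ depending only on $N,\sigma_0$.

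The main obstacle is keeping the constant $C$ uniform in $\sigma\in[\sigma_0,N]$: a single application of Jensen's inequality requires the segment $[\sigma,N]$ to lie well inside $D(N,R)$ in the ratio $r/R$, which fails when $\sigma$ is close to $\sigma_0$ and $R$ is constrained by the size of $N-\sigma_0$. To handle this I would cover $[\sigma,N]$ by a bounded number (depending only on $N$ and $\sigma_0$) of overlapping discs of admissible fixed radius, applying Jensen in each after transferring the lower bound from $N$ to nearby centers via a Borel--Carath\'eodory or Harnack-type argument, and summing the resulting bounds on zero counts. Collecting the three estimates yields the claimed inequality, with the additive $3\pi/2$ accounting for the endpoint argument contributions.
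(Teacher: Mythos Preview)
The paper does not prove this lemma but quotes it from Titchmarsh \cite[Section~9.4]{Tit}; your sketch is precisely that classical argument (decompose along the $L$-shaped contour, control the vertical piece via the nonvanishing of $\mathrm{Re}\,f$ on $\{\mathrm{Re}(s)=N\}$, and bound the horizontal piece by a Jensen zero-count for $h(s)=f(s+iT)+f(s-iT)$), so the approaches coincide.

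There is, however, a small gap in your handling of the disc-size constraint. The proposal to cover $[\sigma,N]$ by several discs and ``transfer the lower bound from $N$ to nearby centers via a Borel--Carath\'eodory or Harnack-type argument'' does not work as written: Borel--Carath\'eodory produces \emph{upper} bounds on the modulus from upper bounds on the real part, and Harnack requires positivity; neither yields a pointwise lower bound $|h(c_j)|\geq cK$ at an auxiliary center $c_j$, which is exactly what a fresh application of Jensen at $c_j$ would require (indeed $h$ may well vanish at such a point). The cleaner fix is to avoid any covering. Take $\sigma_0$ strictly less than the target $\sigma$ (in the application to $Z_\Gamma$ this is free, since the Guillop\'e--Lin--Zworski bound holds for every $\sigma_0$), and apply Jensen once in $D(N,R)$ with $R=N-\sigma_0$ and $r=N-\sigma<R$. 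This replaces $M_{\sigma_0,T+2}$ by $M_{\sigma_0,T+R}$, but in the application $M_{\sigma_0,T+c}$ is of order $e^{CT^\delta}$ regardless of the constant $c$, so the ``$2$'' in the statement should simply be read as an $O_{N,\sigma_0}(1)$.
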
 
%
%
We apply this to the function $Z_{\Gamma}(z)$ by using Lemma~\ref{ZN}
and~\eqref{estimateGLZ}, we obtain directly
%
%
\begin{cor}
If $\Gamma$ is a convex co-compact Schottky group of orientation
preserving isometries of $\hh^{n+1}$ with $n+1$ is even, and
$Z_{\Gamma}(\la)$ the Selberg zeta function for $\Gamma$, then for
$z>1$ large
\[
{\rm Arg}(Z_{\Gamma}(n/2+iz))=\mc{O}(z^\delta)
\]
where $\delta>0$ is the Hausdorff dimension of the limit set of $\Gamma$.
\end{cor}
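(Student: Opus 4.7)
The plan is to apply Lemma~\ref{Tit} directly with $f = Z_\Gamma$. The two inputs we need are already in place: Lemma~\ref{ZN} supplies the lower bound on $\re Z_\Gamma$ on a vertical line far to the right, and the Guillop\'e--Lin--Zworski bound \eqref{estimateGLZ} supplies the upper bound in a vertical strip. Putting them into the general Titchmarsh-type statement will produce the desired $\mathcal O(z^\delta)$ directly, so the argument is essentially a verification of hypotheses.

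First I would check that $Z_\Gamma$ fits the framework of Lemma~\ref{Tit}. The Selberg zeta function is entire, hence holomorphic on $\{\re \lambda > 0\}$, and satisfies $Z_\Gamma(\bar\lambda) = \overline{Z_\Gamma(\lambda)}$ (by analytic continuation from the region $\re\lambda > \delta$ where the defining product has real coefficients), so it is real on the real axis. By Lemma~\ref{ZN} there is a large $N>0$ such that $\re Z_\Gamma(N+it) > 1/2$ for every $t\in\rr$, so one may take $K = 1/2$ in Lemma~\ref{Tit}.

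Next I would fix any $\sigma_0$ with $0 < \sigma_0 < n/2$, which is possible since $n+1 \geq 2$. The estimate \eqref{estimateGLZ} then yields a constant $C_{\sigma_0} > 0$ such that $|Z_\Gamma(\sigma'+it')| \leq \exp\bigl(C_{\sigma_0}(T+2)^\delta\bigr)$ uniformly for $\sigma' \geq \sigma_0$ and $1 \leq t' \leq T+2$, so we may take $M_{\sigma_0, T+2} = \exp\bigl(C_{\sigma_0}(T+2)^\delta\bigr)$. Applying Lemma~\ref{Tit} at $\sigma = n/2 > \sigma_0$ gives
\[
|{\rm Arg}(Z_\Gamma(n/2+iT))| \leq C\bigl(\log M_{\sigma_0,T+2} - \log K\bigr) + 3\pi/2 = \mathcal{O}(T^\delta),
\]
which is the claimed bound.

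The only technicality is that Lemma~\ref{Tit} excludes $T$ equal to the imaginary part of a zero of $Z_\Gamma$. Since such zeros form a discrete set and the argument defined by \eqref{argument} is continuous off this set (with at worst half-integer jumps at crossings), the bound extends to all large $T$ with an $\mathcal O(1)$ correction that is absorbed into the remainder. This is the main (and rather mild) obstacle; the core of the proof is the one-line application of Lemma~\ref{Tit} described above.
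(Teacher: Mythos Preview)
Your proof is correct and follows exactly the approach the paper takes: the paper simply states that one applies Lemma~\ref{Tit} to $Z_\Gamma$, with Lemma~\ref{ZN} furnishing the lower bound $K=1/2$ and the Guillop\'e--Lin--Zworski estimate~\eqref{estimateGLZ} furnishing $M_{\sigma_0,T+2}=\exp(C_{\sigma_0}(T+2)^\delta)$. Your added remark on the exceptional $T$'s is in fact unnecessary here, since by Patterson--Perry the zeros of $Z_\Gamma$ in $\{\re\lambda\geq n/2\}$ are finitely many and real, so for large $T>0$ the hypothesis of Lemma~\ref{Tit} is automatically met.
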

%
%
In fact an integrated estimate is also true, by using a lemma of Hadamard: 
%
%
\begin{lem}
Let $\Gamma$ be a convex co-compact Schottky group of orientation
preserving isometries of $\hh^{n+1}$ with $n+1$ is even, and
$Z_{\Gamma}(\la)$ the Selberg zeta function for $\Gamma$ and
$\delta>0$ the Hausdorff dimension of the limit set. Then for $T>1$
large
\[
\int_{1}^T {\rm Arg}(Z_{\Gamma}(n/2+iz))dt=\mc{O}(T^{\delta}).
\]
\end{lem}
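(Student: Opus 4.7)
The plan is to apply Cauchy's theorem to a single-valued branch of $\log Z_\Gamma(\la)$ on the rectangle with corners $n/2+i$, $N+i$, $N+iT$, $n/2+iT$, where $N$ is the constant from Lemma~\ref{ZN} so that $|Z_\Gamma(N+it)|\geq 1/2$ for all $t\in\rr$. By Patterson--Perry the zeros of $Z_\Gamma$ with $\re\la\geq n/2$ all lie on the real axis, so the rectangle is zero-free and a holomorphic branch of $\log Z_\Gamma$ exists on its closure. Setting the contour integral equal to zero and taking real parts yields, up to a $T$-independent branch constant,
\[
\int_1^T{\rm Arg}(Z_\Gamma(n/2+it))\,dt = \int_{n/2}^N\log|Z_\Gamma(\sigma+iT)|\,d\sigma + \int_1^T{\rm Arg}(Z_\Gamma(N+it))\,dt - \int_{n/2}^N\log|Z_\Gamma(\sigma+i)|\,d\sigma.
\]

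The last two right-hand-side terms are $\mc{O}(1)$ uniformly in $T$. The third is immediate since $Z_\Gamma$ is continuous and non-vanishing on the bottom segment. For the second, the absolutely convergent series defining $\log Z_\Gamma$ on $\{\re\la=N\}$ yields ${\rm Arg}(Z_\Gamma(N+it))=\sum_{\gamma,m}e^{-Nm\ell(\gamma)}\sin(tm\ell(\gamma))/(mG_\gamma(m))$; integrating termwise produces a sum whose $m,\gamma$-th term is controlled by $2e^{-Nm\ell(\gamma)}/(m^2\ell(\gamma)c_0)$, summable thanks to the Patterson--Sullivan counting $\sharp\{\gamma:\ell(\gamma)\leq R\}=\mc{O}(e^{\delta R})$ and $\ell(\gamma)\geq\ell(\gamma_0)>0$ when $N$ is large enough.

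The remaining task is $\int_{n/2}^N\log|Z_\Gamma(\sigma+iT)|\,d\sigma=\mc{O}(T^\delta)$. The upper bound follows directly from \eqref{estimateGLZ}. For the lower bound I would apply the Poisson--Jensen formula to $Z_\Gamma$ on the disc $D:=D(N+iT,r_0)$ with $r_0:=N-n/2$, writing $\log|Z_\Gamma(\la)|$ as a Poisson integral of its boundary values plus a Blaschke sum $\sum_{\rho\in D}\log|b_\rho(\la)|$. Jensen's formula at the center $N+iT$, where $\log|Z_\Gamma|=\mc{O}(1)$ by Lemma~\ref{ZN}, combined with the GLZ upper bound on $\partial D$, shows that both the zero count $\sharp\{\rho\in D\}$ and $\int_0^{2\pi}\bigl|\log|Z_\Gamma(N+iT+r_0e^{i\theta})|\bigr|\,d\theta$ are $\mc{O}(T^\delta)$. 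Integrating the Poisson--Jensen identity over $\sigma\in[n/2,N]$, Fubini and the uniform bound on the Poisson kernel on the segment (which lies inside $D(N+iT,r_0/2)$) give that the Poisson part is $\mc{O}(T^\delta)$, while the Blaschke contribution is nonpositive and bounded below by $-\mc{O}(T^\delta)$ using $\int_{n/2}^N\log|b_\rho(\sigma+iT)|\,d\sigma\geq-C$ uniformly in $\rho\in D$, by integrability of $\log|\sigma-\re\rho|$ over a bounded interval.

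The main obstacle is precisely this Blaschke lower bound: a priori each of the $\mc{O}(T^\delta)$ zeros of $Z_\Gamma$ close to the line $\im\la=T$ can push $\log|Z_\Gamma(\sigma+iT)|$ to $-\infty$ on the segment, which at first sight threatens a spurious $\log T$ loss. The key observation is that the logarithmic singularity is integrable in $\sigma$ and each zero contributes only a uniformly bounded negative amount, so the GLZ zero count $\mc{O}(T^\delta)$ yields an overall $\mc{O}(T^\delta)$ bound rather than $\mc{O}(T^\delta\log T)$.
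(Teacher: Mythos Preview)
Your approach coincides with the paper's: both integrate a branch of $\log Z_\Gamma$ around a zero-free rectangle with left side on $\re\la=n/2$ and right side in the region of absolute convergence, reducing the problem to bounding the horizontal integral $\int_{n/2}^{N}\log|Z_\Gamma(\sigma+iT)|\,d\sigma$ and the argument on the right vertical side. Two differences are worth recording. First, your termwise integration of the sine series gives $\int_1^T{\rm Arg}(Z_\Gamma(N+it))\,dt=\mc{O}(1)$, which is sharper than the paper's pointwise bound $|{\rm Arg}(Z_\Gamma(N+it))|\leq C$ yielding only $\mc{O}(T)$; this is precisely why the paper splits into the cases $\delta\geq 1$ (where $\mc{O}(T)$ is acceptable) and $\delta<1$ (where it instead lets $N\sim c\log T$ to make the right side negligible). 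Your device avoids the case distinction entirely. Second, you explicitly supply a lower bound on $\int\log|Z_\Gamma(\sigma+iT)|\,d\sigma$ via Poisson--Jensen together with the $\mc{O}(T^\delta)$ zero count, whereas the paper invokes only the GLZ estimate \eqref{estimateGLZ}, which is an upper bound on $\log|Z_\Gamma|$; your treatment of this point is the more complete one.

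One small repair is needed in your Poisson--Jensen step. With $r_0=N-n/2$ the endpoint $n/2+iT$ lies on $\partial D$, so the segment certainly does not sit inside $D(N+iT,r_0/2)$, and the Poisson kernel is not uniformly bounded along it (indeed $\int_0^1 P(r,0)\,dr$ diverges on the unit disc). Simply enlarge the disc: take, say, $r_0=2(N-n/2)$, so that the entire segment $[n/2,N]+iT$ lies in $\overline{D(N+iT,r_0/2)}$. Then the Poisson kernel is uniformly bounded on the segment, Jensen's formula at the center $N+iT$ (where $|Z_\Gamma|\geq 1/2$) combined with \eqref{estimateGLZ} on $\partial D$ gives both $\int_0^{2\pi}\bigl|\log|Z_\Gamma|\bigr|\,d\theta=\mc{O}(T^\delta)$ and the zero count in $D$, and your integrability argument for the Blaschke contribution goes through unchanged.
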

\begin{proof}
Let $T>1$. We apply the Hadamard Lemma to the holomorphic function
$Z_{\Gamma}(n/2-iz)$ in the rectangle $z\in [1,T]+i[0,N]$, with no
zeros in that rectangle: we get
\[
\begin{gathered}
\int_{0}^{N} \log |Z_{\Gamma}(n/2+u-i)|du-\int_{0}^{N}\log|Z_{\Gamma}(n/2+u-iT)| du=\\
\int_{1}^T{\rm Arg}(Z_{\Gamma}(n/2-iz))dz-\int_{1}^T{\rm Arg}(Z_{\Gamma}(N-iz))dz.
\end{gathered}
\]
Now, $|{\rm Arg}(Z_{\Gamma}(N-iz))|$ is bounded independently of $z$
when $N$ is large enough, thus the estimate \eqref{estimateGLZ} gives
the result when $\delta\geq 1$. When $\delta<1$, we take $N\sim
c\log(T)$ with $c>4/\ell(\gamma_0)$ and use that $\log|Z_{\Gamma}(n/2+u-iz)|\leq Cz^{\delta}$ in $u\in[0,u_0]$. By what
we explained above, if $u_0$ is large enough (and we take it
independent of $T$), we also have for all $z\in \rr$ and $u>u_0$
\[ |\log(Z_{\Gamma}(n/2+u-iz))|\leq Ce^{-u\ell(\gamma_0)/2}\]
thus 
\[
\begin{split}
\int_{0}^{N} \log |Z_{\Gamma}(n/2+u-i)|du+|\log|Z_{\Gamma}(n/2+u-iT)\|du &\leq CT^\delta+ \int_{u_0}^{N}e^{-u\ell(\gamma_0)/2}du  \\
& \leq CT^{\delta}
\end{split}
\]
and 
\[
\int_{1}^T|{\rm Arg}(Z_{\Gamma}(N-iz))|dz \leq \int_{1}^T e^{-N\ell(\gamma_0)/2} dt \leq CT^{-1}
\]
\end{proof}
%
%
Gathering all these results, we obtain (we assume $\delta\geq n/2$ as otherwise one has a complete formula 
by~\eqref{whendelta<n/2}) 
%
%
\begin{theo}
  \label{resultn+1}
Let $\Gamma$ be a convex co-compact Schottky group of orientation
preserving isometries of $\hh^{n+1}$ with $n+1$ is even and assume
that $\delta$, the Hausdorff dimension of the limit set, is larger or
equal to $n/2$. Then the Krein function for
$X=\Gamma\backslash\hh^{n+1}$ satisfies the Breit--Wigner
approximation: for $\sigma>0$ fixed and all $z\in [T-\sigma,T+\sigma]$
with $T$ large,
\begin{equation}
  \label{xi'}
\pl_z\xi(z)=\chi(X)P_{n}(z)+\frac{1}{\pi}\sum_{\rho\in \mc{R}_{\sigma+1}}\frac{{\rm Im}(\rho)}{|z-\rho|^2}+\mc{O}(T^\delta)
\end{equation}
where $\mc{R}_{\sigma+1}$ is the set of resonances in the disc of radius $\sigma+1$ centered at $T$, and $P_{n}(z)$ is an odd 
polynomial of degree $n$, $\chi(X)$ the Euler characteristic of $X$.
Moreover the following asymptotics holds
\[
\xi(z)=\chi(X)P_{n+1}(z)+R(z) , \quad |R(z)|=\mc{O}(z^\delta), \quad \Big|\int_{0}^T R(z)dz\Big|= \mc{O}(T^{\delta})
\]
where $P_{n+1}(z)$ is an even polynomial of degree $n+1$. 
\end{theo}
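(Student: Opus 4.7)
The starting point is the representation derived just after \eqref{eqfunc}. Since $Z_\Gamma(\bar\la) = \overline{Z_\Gamma(\la)}$, for $z\in\rr$ one has $Z_\Gamma(n/2-iz)/Z_\Gamma(n/2+iz) = e^{-2i{\rm Arg}(Z_\Gamma(n/2+iz))}$, so taking the logarithm of \eqref{eqfunc} yields
\[
\xi(z) = \chi(X) F_{n+1}(z) + \frac{1}{\pi}{\rm Arg}(Z_\Gamma(n/2+iz)) + \frac{c}{2},
\]
where $F_{n+1}(z)$ is the integral appearing on the right of \eqref{eqfunc}. The integrand is an odd polynomial in $t$ modulo $\mc{O}(t^{-\infty})$, so $F_{n+1}(z)$ equals an even polynomial $P_{n+1}(z)$ of degree $n+1$ modulo $\mc{O}(z^{-\infty})$, with the derivative giving an odd polynomial $P_n(z)$ of degree $n$. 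Setting $R(z) := \xi(z) - \chi(X)P_{n+1}(z)$, the non-polynomial part of $\xi$ is, up to constants and $\mc{O}(z^{-\infty})$, exactly $\tfrac{1}{\pi}{\rm Arg}(Z_\Gamma(n/2+iz))$.

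The pointwise bound $|R(z)| = \mc{O}(z^\delta)$ is then an immediate consequence of the Corollary obtained from Lemma~\ref{Tit} applied to $Z_\Gamma$ in combination with the Guillopé--Lin--Zworski estimate \eqref{estimateGLZ} and the lower bound of Lemma~\ref{ZN}. The integrated bound $\bigl|\int_0^T R(z)\,dz\bigr| = \mc{O}(T^\delta)$ follows in the same way from the preceding lemma, which uses Hadamard's three-line argument together with \eqref{estimateGLZ} and Lemma~\ref{ZN} to control the integral $\int_1^T {\rm Arg}(Z_\Gamma(n/2+iz))\,dz$.

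For the Breit--Wigner formula I would differentiate the displayed representation of $\xi$. By \eqref{argument}, $\partial_z {\rm Arg}(Z_\Gamma(n/2+iz)) = {\rm Re}(Z'_\Gamma/Z_\Gamma)(n/2+iz)$. Apply Lemma~\ref{breitwigner} at $\la = n/2 + iz$ with $z\in [T-\sigma,T+\sigma]$:
\[
\frac{Z'_\Gamma(n/2+iz)}{Z_\Gamma(n/2+iz)} = \sum_{s\in \mc{R}'_{\sigma+1}} \frac{1}{n/2+iz - s} + \mc{O}(T^\delta),
\]
where $\mc{R}'_{\sigma+1}$ denotes the zeros of $Z_\Gamma$ in the disc $D(n/2+iT,\sigma+1)$. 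By Patterson--Perry, zeros with ${\rm Re}(\la)\geq n/2$ are finite and lie on the real axis, so for $T$ large none of them contribute to this disc; every $s\in \mc{R}'_{\sigma+1}$ therefore has the form $s = n/2 + i\rho$ with $\rho$ a resonance, and the correspondence $s \leftrightarrow \rho$ maps $D(n/2+iT,\sigma+1)$ bijectively onto the disc $D(T,\sigma+1)$ in the resonance variable. A direct computation writing $\rho = a+ib$ gives
\[
{\rm Re}\!\left(\frac{1}{n/2+iz-s}\right) = {\rm Re}\!\left(\frac{-i}{z-\rho}\right) = \frac{{\rm Im}(\rho)}{|z-\rho|^2},
\]
and combining with $\chi(X)F'_{n+1}(z) = \chi(X)P_n(z) + \mc{O}(z^{-\infty})$ yields \eqref{xi'}.

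Given the lemmas already established, the principal technical point is the translation between zeros of $Z_\Gamma$ and resonances of $\Delta_g$ together with the verification that the $\mc{O}(T^\delta)$ remainders coming from Lemmas~\ref{breitwigner}, \ref{Tit}, and the Hadamard lemma are uniform in $z\in[T-\sigma,T+\sigma]$; both are routine given the preparatory work above. The essential work of the theorem has already been done in establishing Lemma~\ref{breitwigner} and the integrated argument estimate, so the remaining step is simply the bookkeeping described in the previous paragraphs.
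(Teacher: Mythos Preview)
Your proposal is correct and follows exactly the approach the paper intends: the paper's ``proof'' of this theorem is literally the phrase ``Gathering all these results, we obtain'', and you have spelled out precisely how the gathering goes --- the representation $\xi(z)=\chi(X)F_{n+1}(z)+\tfrac{1}{\pi}{\rm Arg}(Z_\Gamma(n/2+iz))+c/2$, the Corollary of Lemma~\ref{Tit} for the pointwise bound on $R$, the Hadamard-rectangle lemma for the integrated bound, and Lemma~\ref{breitwigner} together with the correspondence $s=n/2+i\rho$ between zeros of $Z_\Gamma$ and resonances for the Breit--Wigner formula. Two small remarks: when you invoke Lemma~\ref{breitwigner} you should take the \emph{center} of the disc to be $n/2+iT$ (so that the ``$z$'' of the lemma is your $T$) and then evaluate at $\la=n/2+iz$ for $z\in[T-\sigma,T+\sigma]$, which is what makes $\mc R_{\sigma+1}$ independent of $z$; and the rectangle lemma you use is the classical Hadamard/Littlewood contour identity for $\log f$, not the three-lines theorem.
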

%
%

\subsection{The case of surfaces}

A convex co-compact surface $M=\Gamma\backslash \hh^2$ decomposes into
a compact hyperbolic surface with totally geodesic boundary (the
convex core $N$) and a collection of $n_f$ ends which are funnels
$F_j$ isometric to
\[
[0,\infty)_t \x (\rr/\ell_j\zz)_\theta \textrm{ with metric }dt^2+\cosh(t)^2 d\theta^2.
\]
This can be seen as half of the hyperbolic cylinder $\cjg
\gamma\cjd\backslash \hh^2$ where $\cjg \gamma\cjd$ is the cyclic
elementary group generated by one single hyperbolic isometry with
translation length $\ell_j$. Guillop\'e--Zworski~\cite{GZ2} studied
the spectral shift function $s(z)$%
\footnote{In~\cite{GZ2}, it was denoted $\sigma(z)$ instead of $s(z)$},
related to the pair of operators $(\Delta_{M}, \Delta_F)$ where
$\Delta_F:=\oplus_{j=1}^{n_f}\Delta_{F_j}$ where $\Delta_{F_j}$ is the
Laplacian on $F_j$ with Dirichlet condition at the boundary of $F_j$
(given by $t=0$ in the coordinate above). They define
$s(z)=\frac{i}{2\pi}\log \det \mc{S}(1/2+iz)$ for $z\in \rr$, where
$\mc{S}(s)$ is the relative scattering operator, and they show that
$s'(z)$ is meromorphic in $z\in \cc$ with poles and zeros given
essentially by resonances of the Laplacians. We do not give details
and refer the interested reader to~\cite{GZ2}. In fact, this function
$s(z)$ is given by
\begin{equation}
  \label{sprime} 
s'(z)=\xi'(z)-\xi'_F(z)
\end{equation}
where $\xi$ is the spectral function of \eqref{defofxi} for $M$ and
$\xi_F=\sum_j\xi_{F_j}(z)$ where $\xi_{F_j}$ is defined the same way
as \eqref{defofxi} on the funnel $F_j$ (and for the spectral measure
of $\Delta_{F_j}$); see the proof of Proposition~4.4 in~\cite{BoJuPe}.
It is computed by \cite[(2.10)]{BoJuPe} that there is a constant $c$
such that
\[
e^{2\pi i \xi_{F_j}(z)+c}=\frac{Z_{F_j}(1/2+iz)}{Z_{F_j}(1/2-iz)}
\]
where $Z_{F_j}(s):=e^{-s\ell_j/4}\prod_{k\geq
0}(1-e^{-(s+2k+1)\ell_j})^2$ is the Selberg zeta function for the
funnel $F_j$.  This function converges absolutely for ${\rm Re}(s)>0$
and in particular on the line $1/2+i\rr$. From this convergence, we
directly see that $\xi'_{F_j}(z)=\mc{O}(1)$ and we also deduce from
Lemma \ref{Tit} (as we did before for $Z_\Gamma$) that
$\xi_{F_j}(z)=\mc{O}(1)$ for $z\in \rr$. Thus by combining with
Theorem~\ref{resultn+1} and the Gauss--Bonnet formula ${\rm
Vol}(K)=-2\pi\chi(M)=2\pi(2g-2+n_f)$ (where $g$ is the genus), we
obtain
%
%
\begin{theo}
\label{R(z)remainder}
The scattering phase defined by Guillop\'e--Zworski~\cite{GZ2} for
hyperbolic convex co-compact surfaces $M=\Gamma\backslash\hh^2$
satisfies the asymptotics as $z\to +\infty$
\[
s(z)=\frac{1}{4\pi} {\rm Vol}(K)z^2 +R(z), \quad |R(z)|=\mc{O}(z^\delta), \quad \Big|\int_{0}^zR(t)dt\Big|=O(z^\delta)
\] 
where $K$ is the convex core of $M$, $\delta$ is the Hausdorff
dimension of the limit set of $\Gamma$. Moreover the Breit--Wigner
formula holds: for all $z\in [T-\sigma,T+\sigma]$ with $T$ large and
$\sigma$ fixed
\[
\pl_zs(z)=\frac{1}{4\pi}\vol(K)z^2+\frac{1}{\pi}\sum_{\rho\in \mc{R}_{\sigma+1}}\frac{{\rm Im}(\rho)}{|z-\rho|^2}+\mc{O}(z^\delta)
\]
\end{theo}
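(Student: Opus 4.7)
The plan is to assemble the pieces that the paper has already set up just before the statement: the identity \eqref{sprime} expressing $s'$ as the difference $\xi'-\xi'_F$ between the Krein function on $M$ and the sum of the Krein functions on the funnels, Theorem~\ref{resultn+1} applied in dimension $n+1=2$, and the Gauss--Bonnet identity $\vol(K)=-2\pi\chi(M)$.

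First, I would handle the funnel contribution. The product formula $Z_{F_j}(s)=e^{-s\ell_j/4}\prod_{k\geq 0}(1-e^{-(s+2k+1)\ell_j})^2$ converges absolutely and is bounded in $\{\re(s)\geq 1/2\}$, and its logarithmic derivative on the line $\re(s)=1/2$ is bounded uniformly in $z$. Combined with the formula $e^{2\pi i\xi_{F_j}(z)+c}=Z_{F_j}(1/2+iz)/Z_{F_j}(1/2-iz)$, this yields $\xi'_{F_j}(z)=\mc{O}(1)$. To control $\xi_{F_j}$ itself one applies Lemma~\ref{Tit} to $Z_{F_j}$: the same argument used for $Z_\Gamma$ in the corollary after Lemma~\ref{Tit} gives $\mathrm{Arg}(Z_{F_j}(1/2+iz))=\mc{O}(1)$, hence $\xi_{F_j}(z)=\mc{O}(1)$ and $\int_0^z\xi'_F(t)\,dt=\mc{O}(1)$.

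Next, I would apply Theorem~\ref{resultn+1} to $M=\Gamma\backslash\hh^2$ (convex co-compact surfaces are Schottky, $n+1=2$, $\delta\in(0,1)$). This supplies an even polynomial $P_2$ of degree~$2$ and a remainder $R_\xi(z)$ with $|R_\xi(z)|=\mc{O}(z^\delta)$ and $|\int_0^T R_\xi(t)\,dt|=\mc{O}(T^\delta)$ such that $\xi(z)=\chi(M)P_2(z)+R_\xi(z)$, together with the Breit--Wigner formula for $\partial_z\xi(z)$ on the interval $[T-\sigma,T+\sigma]$. Writing $s(z)-s(0)=\xi(z)-\xi(0)-\int_0^z\xi'_F(t)\,dt$ and absorbing the $\mc{O}(1)$ funnel terms into $R(z)$, I obtain $s(z)=\chi(M)P_2(z)+R(z)$ with $|R(z)|=\mc{O}(z^\delta)$ and $|\int_0^z R(t)\,dt|=\mc{O}(z^\delta)$. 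The same subtraction gives the Breit--Wigner formula for $\partial_z s$ once one notes that the resonance sum for the funnel Laplacian contributes only an $\mc{O}(1)$ (in fact empty in the window $|z-\rho|\leq\sigma+1$ for $z$ large, since funnel resonances have imaginary parts going to $-\infty$), so the resonance sum coming from $\xi'$ is exactly the one for $M$.

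The final step is to identify the $z^2$ coefficient. The even polynomial $P_2$ coming from Theorem~\ref{resultn+1} is extracted from $F_2(z)=-\frac{1}{2\pi}\int_0^z \frac{\Gamma(1/2+it)\Gamma(1/2-it)}{\Gamma(it)\Gamma(-it)}dt$; using $\Gamma(1/2\pm it)\Gamma(it)^{-1}\Gamma(-it)^{-1}$ factors to $t\tanh(\pi t)$ modulo $\mc{O}(t^{-\infty})$, one sees that the leading term of $\chi(M)P_2(z)$ is $-\tfrac{1}{2}\chi(M)z^2$, and Gauss--Bonnet $\vol(K)=-2\pi\chi(M)$ converts this into $\tfrac{1}{4\pi}\vol(K)z^2$. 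There is no real obstacle here: the only mildly delicate point is verifying that the bounded funnel contributions, the constant term from $P_2$, and the integration constant $s(0)$ can all be absorbed into $R(z)$ without spoiling the $\mc{O}(z^\delta)$ bound on its antiderivative, which is automatic since they contribute at most $\mc{O}(z)$ and $\delta<1$ only matters for the pointwise bound~-- the integrated bound is in fact $\mc{O}(z^\delta)+\mc{O}(1)$, which is still $\mc{O}(z^\delta)$ for $z$ large since the polynomial remainders have been pulled out.
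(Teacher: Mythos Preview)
Your proposal follows exactly the route the paper takes: use \eqref{sprime} to write $s'=\xi'-\xi'_F$, bound the funnel contributions by $\mc{O}(1)$ via the absolute convergence of $Z_{F_j}$ and Lemma~\ref{Tit}, invoke Theorem~\ref{resultn+1} for $\xi$, and convert $\chi(M)$ into $\vol(K)$ by Gauss--Bonnet. The paper's own argument is the single sentence immediately preceding the theorem, and you have simply unpacked it.

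Two small slips worth cleaning up. First, your normalisation of $F_2$ carries an extra $1/(2\pi)$ (from \eqref{eqfunc} one has $F_2(z)=-\int_0^z t\tanh(\pi t)\,dt$); your subsequent claim that the leading term is $-\tfrac12\chi(M)z^2$ is nevertheless correct, so this is only a typo. Second, the parenthetical about funnel resonances is unnecessary: since you already have $\xi'_F=\mc{O}(1)$, it is absorbed directly into the $\mc{O}(T^\delta)$ error of the Breit--Wigner formula for $\xi'$, and the sum $\mc{R}_{\sigma+1}$ is over resonances of $M$ from the start. (Incidentally, the funnel resonances have $\im(\rho)\geq 3/2$, not $\to -\infty$.) Your last paragraph is a bit tangled: a genuine nonzero constant in $R$ would integrate to $\mc{O}(z)$, not $\mc{O}(1)$, so the sentence as written does not resolve the worry you raise; the paper does not address this point either, and the cleanest fix is to keep the full even polynomial $P_2$ (constant term included) outside $R$ as in Theorem~\ref{resultn+1}, which is implicitly what the statement means.
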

%
%

In~\cite{GZ2}, an asymptotics with a remainder $\mc{O}(z)$ using the
method of Melrose~\cite{Me1} was proved.  Our remainder estimate is
thus always better than~\cite{GZ2} since $\delta<1$.

\subsection{Lower bounds for the remainder}

For arithmetic hyperbolic surfaces, one can use the argument of
Jakobson--Naud~\cite{JaNa} to get a polynomial Omega lower bound on
the argument of Selberg zeta function on the line ${\rm Re}(s)=1/2$:
%
%
\begin{prop}{\bf [Jakobson--Naud]}
Let $\Gamma$ be a convex co-compact subgroup of isometries of $\hh^2$,
which is an infinite index subgroup of an arithmetic group $\Gamma_0$
derived from a quaternion algebra. Assume that the Hausdorff dimension
$\delta$ of the limit set satisfies $\delta>3/4$, then for all
$\eps>0$, the argument ${\rm Arg}(Z_\Gamma(1/2+iz))$ of Selberg zeta
function for $z\in\rr$ is {\bf not} an $\mc{O}(z^{2\delta-3/2-\eps})$.
\end{prop}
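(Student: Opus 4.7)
The plan is to deduce the $\Omega$-lower bound on ${\rm Arg}(Z_\Gamma(\demi+iz))$ from Jakobson--Naud's resonance density estimate \cite{JaNa}, via a Riemann--von Mangoldt type identity on the critical line. As a first step I would apply the argument principle to large rectangles with one side on the critical line to obtain a formula of the form
\[
{\rm Arg}\,Z_\Gamma(\demi+iz) = \pi\,S_\Gamma(z) + (\text{smooth main term}),
\]
where $S_\Gamma(z)$ measures the fluctuation of the resonance counting function about its Weyl-type main term. Thus it suffices to prove that $S_\Gamma$ is not $\mc{O}(z^{2\delta-3/2-\eps})$ for any $\eps>0$.

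Next I would input the arithmetic structure of $\Gamma$. By results of Sarnak and Luo--Sarnak on arithmetic groups derived from quaternion algebras, the primitive length spectrum has exponentially large multiplicities: the multiplicity $g(\ell)$ satisfies $g(\ell)\gtrsim e^{\ell/2}/\ell$ on an infinite subsequence of lengths $\ell\to\infty$. Applying Selberg's trace formula (equivalently, the explicit formula for the logarithmic derivative of $Z_\Gamma$) to a test function whose Fourier transform is concentrated near such a length $\ell\sim c\log T$ produces a contribution on the geometric side of order $T^{2\delta-3/2+o(1)}$, arising from the competition between the multiplicity gain $e^{\ell/2}$, the critical-line damping $e^{-\ell/2}$ through $|G_\gamma(m)|$, and the density $\sim e^{\delta\ell}/\ell$ of primitive lengths up to $\ell$. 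The hypothesis $\delta>3/4$ is precisely what makes this exponent positive.

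Finally I would compare the two sides to conclude: if $S_\Gamma(z) = \mc{O}(z^{2\delta-3/2-\eps})$ held uniformly, the smoothed spectral side of the trace formula would be bounded by the same quantity, contradicting the geometric lower bound constructed above. The main obstacle is justifying an explicit-formula-type identity on the critical line, where the Dirichlet expansion of $\log Z_\Gamma$ diverges and the naive trace formula cannot be applied. This is handled by a smoothed Perron formula together with the Guillop\'e--Lin--Zworski bound \eqref{estimateGLZ}, which allows one to shift contours into the region of convergence at depth $\sim \log T$ and hence to truncate the geodesic sum at length $c\log T$ with an acceptable error while preserving the arithmetic multiplicity spike. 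All of this machinery is essentially the argument carried out in \cite{JaNa}.
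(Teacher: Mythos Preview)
Your strategy is in the right spirit but takes a detour that the paper avoids entirely. The paper's proof never invokes the argument principle, a resonance counting fluctuation $S_\Gamma$, or any Riemann--von Mangoldt identity; it works directly with the distributional relation between $\partial_z{\rm Arg}(Z_\Gamma(\demi+iz))={\rm Re}\,(Z'_\Gamma/Z_\Gamma)(\demi+iz)$ and the length spectrum. Pairing this against $\widehat{\varphi_{t,\alpha}}$, where $\varphi_{t,\alpha}(x)=e^{-itx}\varphi_0(x-\alpha)$ with $\varphi_0\in C_0^\infty((-1,1))$, yields precisely the \emph{finite} geodesic sum $S_{\alpha,t}$ of \cite[Proposition~3.4]{JaNa}, and one simply quotes their lower bound $\int_T^{3T}|S_{\alpha,t}|^2\,dt\geq CT^{4\delta-2}(\log T)^{-2}$ with $\alpha=2\log T-A$. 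An integration by parts transfers the pairing onto ${\rm Arg}(Z_\Gamma)$ itself; if this were $\mc{O}(z^{2\delta-3/2-\eps})$, the rapid decay of $\partial_z\widehat{\varphi_{t,\alpha}}(z)$ away from $z\sim -t$ would force $|S_{\alpha,t}|=\mc{O}(t^{2\delta-3/2-\eps})$ uniformly in $\alpha$, hence $\int_T^{3T}|S_{\alpha,t}|^2\,dt=\mc{O}(T^{4\delta-2-2\eps})$, a contradiction.

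What this buys over your route: compact support of $\varphi_0$ in the length variable makes the geodesic sum $S_{\alpha,t}$ literally finite, so the convergence issue on the critical line that you flag never arises, and no contour shifting, smoothed Perron formula, or appeal to~\eqref{estimateGLZ} is needed at this stage. Your first step is also less natural here than for the Riemann zeta function: the resonances of $\Gamma\backslash\hh^2$ are scattered in a half-plane rather than lying on a line, so there is no clean ``fluctuation of a line-counting function'' to identify with the boundary argument via the argument principle on a rectangle. One can make something like it work, but you would then be reconstructing exactly the geodesic-sum identity above after a round trip through the zeros and the trace formula; the paper simply skips that round trip.
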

\begin{proof}
Let $Z_\Gamma(\la)$ be the Selberg zeta function of
$\Gamma$. In~\cite[Proposition~3.4]{JaNa}, it is shown that there
exist some constants $A>0,C>0$ such that for all $T>0$ large and
$\alpha=2\log(T)-A$, then
\begin{equation}
  \label{jaknaud} 
\int_{T}^{3T}|S_{\alpha,t}|^2dt\geq \frac{CT^{4\delta-2}}{(\log T)^2} 
\end{equation}
where $S_{\alpha,t}$ is defined for $\alpha>0,t>0$ as follows: let
$\varphi_0\in C_0^\infty((-1,1))$ non-negative equal to $1$ in
$[-1/2,1/2]$, then set
$\varphi_{t,\alpha}(x)=e^{-itx}\varphi_0(x-\alpha)$ and
\[
\begin{split} 
S_{\alpha,t}:=& \sum_{k\geq 1}\sum_{\gamma }\frac{\ell(\gamma)}{2\sinh(k\ell(\gamma)/2)}e^{-itk\ell(\gamma)}
\varphi_0(k\ell(\gamma)-\alpha)\\
 = &\frac{1}{\pi}\cjg \pl_z{\rm Arg}(Z_\Gamma(\demi+iz)), \widehat{\varphi_{t,\alpha}}(z)\cjd=-\frac{1}{\pi}{\cjg \rm Arg}(Z_\Gamma(\demi+iz)), 
 \pl_z\widehat{\varphi_{t,\alpha}}(z)\cjd.
\end{split}
\]
Now, assume that ${\rm Arg}(Z_\Gamma(\demi+iz))=\mc{O}(z^{2\delta-3/2-\eps})$ for some
$\eps>0$, then since $|\pl_z\widehat{\varphi_{t,\alpha}}(z)|=\mc{O}((1+|t+z|)^{-N})$ for
all $N\in\nn$, we get for $t>0$ large
\[
|S_{\alpha,t}|\leq Ct^{2\delta-3/2-\eps}
\]
where $C$ does not depend on $\alpha$. This estimate contradicts
\eqref{jaknaud} for $T$ large enough, this ends the proof.
\end{proof}
%
%
Note that such groups exist, see~\cite[Section~4]{JaNa}.
By~\eqref{sprime} and \eqref{eqfunc}, we then deduce the
%
%
\begin{cor}
If $\Gamma$ is a convex co-compact infinite index subgroup of an
arithmetic group $\Gamma_0$ derived from a quaternion algebra, and if
the Hausdorff dimension $\delta$ of the limit set of $\Gamma$
satisfies $\delta>3/4$, then for all $\eps>0$ the remainder $R(z)$ in
the scattering phase asymptotics of Theorem \ref{R(z)remainder} is {\bf
not} an $\mc{O}(z^{2\delta-3/2-\eps})$.
\end{cor}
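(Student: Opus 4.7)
The plan is to combine the Jakobson--Naud proposition proved just above with the two identities \eqref{sprime} and \eqref{eqfunc} to pull an omega lower bound on $\mathrm{Arg}(Z_\Gamma(1/2+iz))$ back to one on the scattering-phase remainder $R(z)$.

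First I would argue by contradiction. Suppose $R(z)=\mc{O}(z^{2\delta-3/2-\eps})$ for some $\eps>0$. From \eqref{sprime} we have $s'(z)=\xi'(z)-\xi_F'(z)$, so after integrating
\[
s(z)=\xi(z)-\xi_F(z)+C_0
\]
for some constant $C_0\in\rr$. In the discussion preceding Theorem~\ref{R(z)remainder} it is already established that $\xi_F(z)=\sum_j \xi_{F_j}(z)=\mc{O}(1)$, because each funnel Selberg zeta function $Z_{F_j}(1/2+iz)$ is given by an absolutely convergent infinite product on the critical line; this contribution will therefore be absorbed into the remainder without affecting the bound.

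Next I would use the explicit formula following \eqref{eqfunc}: in the surface case ($n=1$)
\[
\xi(z)=\chi(M)F_2(z)+\frac{1}{\pi}\mathrm{Arg}(Z_\Gamma(1/2+iz))+\tfrac{c}{2},
\]
where $F_2(z)$ is, modulo $\mc{O}(z^{-\infty})$, an even polynomial of degree $2$ whose leading coefficient, combined with Gauss--Bonnet $\vol(K)=-2\pi\chi(M)$, reproduces exactly the Weyl term $\frac{1}{4\pi}\vol(K)z^2$ in Theorem~\ref{R(z)remainder} (this matching was already implicitly used to derive the Weyl law in that theorem, so the degree-$2$ parts cancel). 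Subtracting the Weyl term and the constants, one obtains
\[
R(z)=\frac{1}{\pi}\mathrm{Arg}(Z_\Gamma(1/2+iz))+\mc{O}(1).
\]
Under our assumption $R(z)=\mc{O}(z^{2\delta-3/2-\eps})$ this forces $\mathrm{Arg}(Z_\Gamma(1/2+iz))=\mc{O}(z^{2\delta-3/2-\eps})$ (since $2\delta-3/2-\eps>0$ when $\delta>3/4$ and $\eps$ is small enough). This contradicts the Jakobson--Naud proposition proved just above, completing the argument.

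The only point that requires care, and which I would flag as the main bookkeeping step, is the matching of the polynomial parts: one must check that all polynomial terms in $\chi(M)F_2(z)-\xi_F(z)-C_0$ beyond the leading $z^2$ are either absent (by the even-polynomial structure of $F_2$) or absorbed into the $\mc{O}(1)$ coming from $\xi_F$ and the constants $c$, $C_0$. Once this is done, the remainder is literally $\frac{1}{\pi}\mathrm{Arg}(Z_\Gamma(1/2+iz))$ up to a bounded term, and the omega bound transfers directly.
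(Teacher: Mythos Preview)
Your proposal is correct and follows exactly the route indicated in the paper, which simply says ``By~\eqref{sprime} and~\eqref{eqfunc}, we then deduce the [Corollary]'' without spelling out details. You have filled those details in correctly: the key identity $R(z)=\frac{1}{\pi}\mathrm{Arg}(Z_\Gamma(1/2+iz))+\mc{O}(1)$ is precisely what \eqref{sprime}, \eqref{eqfunc}, the boundedness of $\xi_F$, and the Gauss--Bonnet matching of the Weyl term give, and then the Jakobson--Naud proposition yields the contradiction. One small remark on your caveat about $\eps$: since the claim is ``for all $\eps>0$'', if $R(z)=\mc{O}(z^{2\delta-3/2-\eps})$ for some large $\eps$ making the exponent nonpositive, then a fortiori $R(z)=\mc{O}(z^{2\delta-3/2-\eps'})$ for any smaller $\eps'>0$, so one may always reduce to the case $2\delta-3/2-\eps>0$ where the $\mc{O}(1)$ is absorbed.
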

%
%
In general, without the arithmetic assumption, the argument of
Jakobson--Naud~\cite{JaNa} shows that for all $\eps>0$, if
$\delta>1/2$ then the remainder $R(z)$ is not an $\mc{O}((\log
z)^{\frac{\delta-1/2}{\delta}-\eps})$.


\noindent\textbf{Acknowledgements.}
C.G. is supported by ANR grant ANR-09-JCJC-0099-01. S.D. was partially
supported by NSF grant DMS-0654436. We thank Johannes Sj\"ostrand and
Maciej Zworski for answering our questions on the counting functions
for resonances, as well as Fr\'ed\'eric Naud and Vesselin Petkov for
useful discussions and references.

\def\arXiv#1{\href{http://arxiv.org/abs/#1}{arXiv:#1}}


\begin{thebibliography}{0}

\bibitem[B\'e]{berard} P.~B\'erard,
\emph{On the wave equation on a compact manifold without conjugate points,\/}
Math. Z. \textbf{155}(1977), no.~3, 249--276.

\bibitem[BiKr]{BiKr} M.S.~Birman and M.G.~Krein,
\emph{On the theory of wave operators and scattering operators,\/} 
Dokl. Akad. Nauk SSSR \textbf{144}(1962), 475--478; English translation in Soviet. Math. Dokl \textbf{3}(1962).

\bibitem[Bo]{Bo} D.~Borthwick,
\emph{Upper and lower bounds on resonances for manifolds hyperbolic near infinity,\/} 
Comm. PDE \textbf{33}(2008), 1507--1539.

\bibitem[BoJuPe]{BoJuPe} D.~Borthwick, C.~Judge, and P.~Perry,
\emph{Selberg's zeta function and the spectral  geometry of geometrically finite hyperbolic surfaces,\/}
Comment. Math. Helv. \textbf{80}(2005), no.~3, 483--515.

\bibitem[BoRu]{BoRu} R.~Bowen and D.~Ruelle,
\emph{The ergodic theory of Axiom A flows,\/}
Invent. Math. \textbf{29}(1975), 181--202.

\bibitem[BrPe]{BrPe} V.~Bruneau and V.~Petkov,
\emph{Meromorphic continuation of the spectral shift function,\/} 
Duke Math. J, \textbf{116}(2003), no.~3, 389--430.

\bibitem[Bu]{Bu} V.S.~Buslaev,
\emph{Local spectral asymptotic behavior of the Green's function in exterior problems for the Schr\"odinger operator,\/} 
(Russian. English summary) Collection of articles dedicated to the memory of Academician V. I. Smirnov.
Vestnik Leningrad. Univ. No. 1 Mat. Meh. Astronom. Vyp. 1 (1975), 55--60, 189.

\bibitem[Chr]{Chr} T.~Christiansen, 
\emph{Spectral asymptotics for compactly supported perturbations of the Laplacian on $\mathbb R^n$,\/}
Comm. PDE \textbf{23}(1998), no. 5--6, 933--948.  
 
\bibitem[Di]{Di} M. Dimassi, \emph{Spectral shift function and resonances for slowly varying perturbations of periodic Schršdinger operators.} J. Funct. Anal. \textbf{225} (2005), no. 1, 193Ð228.
 
\bibitem[DiSj]{d-s} M.~Dimassi and J.~Sj\"ostrand,
{\em Spectral asymptotics in the semi-classical limit,\/}
Cambridge University Press, 1999.

\bibitem[DyGu]{DyGu} S.~Dyatlov and C.~Guillarmou,
\emph{Microlocal limits of plane waves and Eisenstein functions,\/} 
preprint, \arXiv{1204.1305}.

\bibitem[Gr]{Gr} C.R. Graham, \emph{Volume and area renormalizations for
conformally compact Einstein metrics}, Rend. Circ. Mat. Palermo, Ser.II,
Suppl. \textbf{63} (2000), 31--42.

\bibitem[Gu]{GuAJM} C.~Guillarmou,
\emph{Generalized Krein formula, determinants and Selberg zeta function in even dimension,\/} 
Amer. J. Math. \textbf{131}(2009), no~5.

\bibitem[GuLiZw]{GLZ} L.~Guillop\'e, K.~Lin, and M.~Zworski,
\emph{The Selberg zeta function for convex co-compact Schottky groups,\/} 
Comm. Math. Phys. \textbf{245}(2004), no.~1, 149--176. 

\bibitem[GuZw]{GZ2} L.~Guillop\'e and M.~Zworski,
\emph{Scattering asymptotics for Riemann surfaces,\/}
Ann. Math. \textbf{145}(1997), 597--660.

\bibitem[HeRo]{HeRo} B.~Helffer and D.~Robert,
\emph{Calcul fonctionnel par la transform\'ee de Mellin et op\'erateurs admissibles,\/}
J. Func. Anal. \textbf{53}(1983), no~3, 246--268.

\bibitem[H\"oIII]{Ho} L.~H\"ormander,
{\em The Analysis of Linear Partial Differential Operators III.
Pseudo-differential Operators,\/}
Springer, 1985.

\bibitem[Iv]{Iv} V.~Ivrii,
\emph{The second term of the spectral asymptotics for a LaplaceÐBeltrami operator on manifolds with boundary,\/}
Funktsional. Anal. i Prilozhen \textbf{14}(1980), no.~2, 25--34.

\bibitem[JaNa]{JaNa} D. Jakobson, F. Naud, \emph{Lower bounds for resonances of infinite area Riemann surfaces}. 
Analysis and PDE \textbf{3} (2010), no 2, 207--225.

\bibitem[JeKa]{JeKa} A.~Jensen and T.~Kato,
\emph{Asymptotic behavior of the scattering phase for exterior domains,\/}
Comm. PDE \textbf{3}(1978), no.~12, 1165--1195.

\bibitem[MaRa1]{MaRa} A.~Majda and J.~Ralston, 
\emph{An analogue of Weyl's theorem for unbounded domains. I,\/}
Duke Math. J. \textbf{45}(1978), no.~1, 183--196.

\bibitem[MaRa2]{MaRa2} A.~Majda and J.~Ralston,
\emph{An analogue of Weyl's theorem for unbounded domains. II,\/}
Duke Math. J. \textbf{45}(1978), no.~3, 513--536.

\bibitem[MaMe]{MaMe} R.R.~Mazzeo and R.B.~Melrose,
{\em Meromorphic extension of the resolvent on complete spaces
with asymptotically constant negative curvature,\/}
J. Funct. Anal. \textbf{75}(1987), no.~2, 260--310.

\bibitem[Me83]{Me0} R.B.~Melrose, 
\emph{Polynomial bound on the number of scattering poles,\/}
J. Funct. Anal. \textbf{53}(1983), no.~3, 287--303. 

\bibitem[Me88]{Me1} R.B.~Melrose,
\emph{Weyl asymptotics for the phase in obstacle scattering,\/}
Comm. PDE \textbf{13}(1988), no.~11, 1431--1439.

\bibitem[Pa]{Pa} S.J.~Patterson,
\emph{The limit set of a Fuchsian group,\/}
Acta Math. \textbf{136}(1976), no.~3--4, 241--273.

\bibitem[PaPe]{PP} S.~Patterson and P.~Perry,
\emph{The divisor of Selberg's zeta function for Kleinian groups. Appendix A by Charles Epstein,\/}
Duke Math. J. \textbf{106}(2001), 321--391.

\bibitem[PePo]{PePo} V.~Petkov and G.~Popov,
\emph{Asymptotic behaviour of the scattering phase for non-trapping obstacles,\/}
Ann. Inst. Fourier \textbf{32}(1982), 111--149.

\bibitem[PeZw]{PeZw} V.~Petkov and M.~Zworski,
\emph{Breit--Wigner approximation and distribution of resonances,\/} 
Comm. Math. Phys. \textrm{204}(1999), 329--351. 

\bibitem[Ro]{Ro} D.~Robert,
\emph{Asymptotique de la phase de diffusion \`a haute \'energie pour des perturbations du second ordre du laplacien,\/}
Ann. Sci. \'Ecole Norm. Sup. (4) \textbf{25}(1992), no.~4, 107--134.

\bibitem[Sj90]{Sj0} J.~Sj\"ostrand,
\emph{Geometric bounds on the density of resonances for semiclassical problems,\/}
Duke Math. J. \textbf{60}(1990), no.~1, 1--57.

\bibitem[Sj97]{Sj} J.~Sj\"ostrand,
\emph{A trace formula and review of some estimates for resonances,\/}
 in Microlocal analysis and spectral theory 
(Lucca 1997), 377--437, NATA Adv. Sci. Inst. Ser. C Math. Phys Sci., 490, Dordrecht , Kluwer Acad. Publ.  1997.

\bibitem[SjZw91]{SjZw} J.~Sj\"ostrand and M.~Zworski,
\emph{Complex scaling and the distribution of resonances},
J. Amer. Math. Soc. \textbf{4}(1991), 729--769.

\bibitem[SjZw94]{SjZwJFA}  J.~Sj\"ostrand and M.~Zworski,
\emph{Lower bounds on the number of scattering poles, II,\/}
J. Funct. Anal. \textbf{123}(1994), 336--367.

\bibitem[SjZw07]{SjZw1} J.~Sj\"ostrand and M.~Zworski,
\emph{Fractal upper bounds on the density of semiclassical resonances,\/}
Duke Math. J. \textbf{137}(2007), no.~3, 381--459. 

\bibitem[Su]{Su} D.~Sullivan,
\emph{The density at infinity of a discrete group of hyperbolic motions,\/}
Publ. Math. de  l'IHES, \textbf{50}(1979), 171--202.

\bibitem[Ti]{Tit} E.C.~Titchmarsh,
\emph{The theory of the Riemann zeta function,\/}
Second edition. Clarendon Press, Oxford, 1986.

\bibitem[Vo]{Vo} G.~Vodev,
\emph{Sharp polynomial bound on the number of scattering poles for metric perturbations of the Laplacian in  $\rr^n$,\/}
Math. Ann. \textbf{291}(1991), 39--49.

\bibitem[Yo]{Yo} L.S.~Young,
\emph{Some large deviation results for dynamical systems,\/}
Trans. Amer. Math. Soc. \textbf{318}(1990), no.~2, 325--343. 

\bibitem[Zw97]{Zw0} M. Zworski,
\emph{Poisson formulae for resonances,\/}
S\'eminaire sur les \'Equations aux D\'eriv\'ees Partielles, 1996--1997, Exp. No. XIII, 14 pp.

\bibitem[Zw]{e-z} M.~Zworski,
{\em Semiclassical analysis,\/}
to appear in Graduate Studies in Mathematics, AMS, 2012.

\end{thebibliography}
\end{document}